\numberwithin{equation}{subsection}
\theoremstyle{plain}
\newtheorem{thm}[subsection]{Theorem}
\newtheorem{lemma}[subsection]{Lemma}
\newtheorem{cor}[subsection]{Corollary}
\theoremstyle{definition}
\newtheorem{defn}[subsection]{Definition}
\newtheorem{notn}[subsection]{Notation}
\newtheorem{ackn}[subsection]{Acknowledgement}
\theoremstyle{remark}
\newtheorem{rem}[subsection]{Remark}
\newtheorem{conclrem}[subsection]{Concluding remarks}
\newtheorem{example}[subsection]{Example}
\begin{document}
\title{Iterated line integrals over Laurent series fields of characteristic $p$}
\author{Ambrus P\'al}
\date{March 16, 2017.}
\address{Department of Mathematics, 180 Queen's Gate, Imperial College, London, SW7 2AZ, United Kingdom}
\email{a.pal@imperial.ac.uk}
\begin{abstract} Inspired by Besser's work on Coleman integration, we use $\nabla$-modules to define iterated line integrals over Laurent series fields of characteristic $p$ taking values in double cosets of unipotent  $n\times n$ matrices with coefficients in the Robba ring divided out by unipotent $n\times n$ matrices with coefficients in the bounded Robba ring on the left and by unipotent $n\times n$ matrices with coefficients in the constant field on the right. We reach our definition by looking at the analogous theory for Laurent series fields of characteristic $0$ first, and reinterpreting the classical formal logarithm in terms of $\nabla$-modules on formal schemes. To illustrate that the new $p$-adic theory is non-trivial, we show that it includes the $p$-adic formal logarithm as a special case.  
\end{abstract}
\footnotetext[1]{\it 2000 Mathematics Subject Classification. \rm 14K15, 14F30, 14F35.}
\maketitle
\pagestyle{myheadings}
\markboth{Ambrus P\'al}{Line integrals over Laurent series fields}

\section{Formal iterated line integrals over Laurent series fields of characteristic zero}

In order to motivate our investigations over fields of positive characteristic, first we will look at a theory which could be justifiably considered as a formal analogue of line integrals over Laurent series fields of characteristic zero. We will start with the formal analogue of the logarithm, the most basic such contruction. Let $k$ a field of characteristic $0$.  The formal logarithm:
$$\log(1-z)=-\sum_{n=1}^{\infty}\frac{z^n}{n}\in\mathbb Q[[z]]$$
can be used to define a homomorphism:
$$k[[t]]^*/k^*\longrightarrow k[[t]]$$
as follows. Every $u\in k[[t]]^*$ can be written uniquely as:
$$u=c(1-w),\quad c\in k^*,\ w\in tk[[t]].$$
The infinite sum:
$$\log(1-w)=-\sum_{n=1}^{\infty}\frac{w^n}{n}$$
converges in the $t$-adic topology to a power series in $k[[t]]$, and the map:
$$k[[t]]^*\to k[[t]],\quad u\mapsto\log(1-w)$$
is a homomorphism with kernel $k^*$ which we will denote by log by slight abuse of notation.

It is possible to reinterpret this construction using differential algebra. Let $\Omega^1_{k[[t]]/k}$ be module of continuous K\"ahler differentials of $k[[t]]$ over $k$, i.e.~ the free module over
$k[[t]]$ generated by the symbol $dt$, where the derivation $d:k[[t]]\to\Omega^1_{k[[t]]/k}$ is given by the formula
$$d\big( \sum_{j=0}^{\infty}x_j t^j \big) =
\big( \sum_{j=1}^{\infty}jx_j t^{j-1} \big)\,dt.$$
Then the first de Rham cohomology group
$$H^1_{dR}(k[[t]])\stackrel{\textrm{def}}{=}
\Omega^1_{k[[t]]/k}/dk[[t]]$$
of $k[[t]]$ is trivial. Therefore for every $u\in k[[t]]^*$ there is a unique $v\in tk[[t]]$ such that
$$dv=\frac{du}{u}.$$
Note that $v=\log(u)$. Indeed this follows at once by differentiating the infinite sum term by term and using that $d$ is continuous in the $t$-adic topology. So the relation:
$$d\log(u)=\frac{du}{u}$$
can be used to define the formal logarithm. Next we give a geometric reformulation of this relation using the theory of
$\nabla$-modules.
\begin{defn}\label{nabla1} A $\nabla$-module over $k[[t]]$ is a pair $(M,\nabla)$, where $M$ is a finite, free $k[[t]]$-module, and $\nabla$ is a connection on $M$, i.e.~a $k$-linear map:
$$\nabla:M\to M\otimes_{k[[t]]}\Omega^1_{k[[t]]/k}$$
satisfying the Leibniz rule
$$\nabla(c\mathbf v)=c\nabla(\mathbf v)+\mathbf v\otimes dc\ \ 
(\forall c\in k[[t]],\mathbf v\in M).$$ 
The trivial $\nabla$-module over $k[[t]]$ is just the pair $(k[[t]],d)$. A horizontal map from a $\nabla$-module $(M,\nabla)$ to another $\nabla$-module $(M',\nabla')$ is just a $k[[t]]$-linear map
$f:M\to M'$ such that the following diagram is commutative:
$$\xymatrix{
M \ar^-{\nabla}[r] \ar^{f}[d] & M\otimes_{k[[t]]}\Omega^1_{k[[t]]/k} \ar^{f\otimes_{k[[t]]}\textrm{\rm id}_{\Omega^1_{k[[t]]/k}}}[d] \\
M'\ar^-{\nabla'}[r] & M'\otimes_{k[[t]]}\Omega^1_{k[[t]]/k}.}$$
As usual we will simply denote by $M$ the ordered pair $(M,\nabla)$ whenever this is convenient.
\end{defn}
These objects form a $k$-linear Tannakian category, with respect to horizontal maps as morphisms, and with the obvious notion of directs sums, tensor products, quotients and duals. In fact this Tannakian category is neutral, and the fibre functor is supplied by the lemma below. 
\begin{defn} A horizontal section of a $\nabla$-module
$(M,\nabla)$ over $k[[t]]$ is an $s\in M$ such that $\nabla(s)=0$. We denote the set of the latter by $M^{\nabla}$.
\end{defn}
The following claim is very well-known:
\begin{lemma}\label{solvability} For every $(M,\nabla)$ as above $M^{\nabla}$ is a $k$-linear vector space of dimension equal to the rank of $M$ over $k[[t]]$.
\end{lemma}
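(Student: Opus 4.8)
The plan is to reduce the statement to the classical existence and uniqueness theorem for the formal linear system of differential equations attached to $\nabla$, and then to solve that system by a coefficient recursion in which the characteristic-zero hypothesis is used decisively.

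First I would fix a $k[[t]]$-basis $e_1,\dots,e_n$ of $M$, identifying $M\cong k[[t]]^n$, and encode the connection by its connection matrix $A=(a_{ij})\in\mathrm{Mat}_n(k[[t]])$ defined by $\nabla(e_j)=\sum_i e_i\otimes a_{ij}\,dt$. Writing an arbitrary element as $\mathbf v=\sum_j f_j e_j$ with column vector $F=(f_1,\dots,f_n)^{\mathrm t}\in k[[t]]^n$, the Leibniz rule gives $\nabla(\mathbf v)=\sum_i e_i\otimes\big(f_i'+\sum_j a_{ij}f_j\big)\,dt$, where $f_i'=df_i/dt$. Hence $M^{\nabla}$ is identified, $k$-linearly, with the solution space of the system $F'=-AF$ inside $k[[t]]^n$. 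That this is a $k$-vector space is immediate; the content of the lemma is its dimension.

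Next I would introduce the evaluation map $M^{\nabla}\to k^n$, $F\mapsto F(0)$, sending a horizontal section to the constant term of its coefficient vector, and prove it is a $k$-linear isomorphism, which at once yields $\dim_k M^{\nabla}=n=\mathrm{rank}_{k[[t]]}M$. Both injectivity and surjectivity follow from a single computation: expanding $F=\sum_m c_m t^m$ and $A=\sum_m A_m t^m$ with $c_m\in k^n$ and $A_m\in\mathrm{Mat}_n(k)$, and comparing coefficients of $t^m$, the equation $F'=-AF$ becomes equivalent to the recursion $(m+1)c_{m+1}=-\sum_{j=0}^{m}A_{m-j}c_j$.

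The decisive point, which I expect to carry the whole argument, is that since $\mathrm{char}(k)=0$ the integer $m+1$ is invertible in $k$, so the recursion determines $c_{m+1}$ uniquely from $c_0,\dots,c_m$. Consequently any prescribed constant term $c_0\in k^n$ extends to exactly one formal solution $F$, and there is no convergence issue, since the coefficients are produced term by term and the resulting series lies in $k[[t]]^n$ by construction. This shows the evaluation map is bijective, finishing the proof. I would also remark that this is precisely the step that fails in positive characteristic, where the factor $m+1$ vanishes whenever $p\mid m+1$, which is exactly what motivates the more delicate theory developed in the remainder of the paper.
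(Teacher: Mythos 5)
Your proof is correct and is essentially the same argument the paper invokes: the paper simply cites the proof of Theorem 7.2.1 of Kedlaya's book and points to the recurrence $(i+1)U_{i+1}=\sum_{j=0}^{i}N_jU_{i-j}$, which is solvable in characteristic zero --- exactly the coefficient recursion $(m+1)c_{m+1}=-\sum_{j=0}^{m}A_{m-j}c_j$ you write down, applied to the columns of a fundamental solution matrix. Your version just spells out in full what the citation leaves implicit, including the observation that invertibility of $m+1$ is the step that fails in characteristic $p$.
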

\begin{proof} See the proof of Theorem 7.2.1 of \cite{Ke} on page 121. Note that the recurrence
$$(i+1)U_{i+1}=\sum_{j=0}^iN_jU_{i-j}$$
has a solution in our case, too, since $k$ has characteristic zero. 
\end{proof}
Note that for every $s\in M^{\nabla}$ there is a unique morphism from the trivial $\nabla$-module to $(M,\nabla)$ such that the image of $1$ is $s$. Therefore the lemma above implies that every $\nabla$-module over $k[[t]]$ is {\it trivial}, i.e.~it is isomorphic to the $n$-fold direct sum of the trivial $\nabla$-module for some $n$.
In fact we get more:
\begin{cor}\label{equivalence} The functor
$$(M,\nabla)\mapsto M^{\nabla}$$
is a $k$-linear tensor equivalence of between the Tannakian categories of $\nabla$-modules over $k[[t]]$ and of finite dimensional $k$-linear vector spaces.
\end{cor}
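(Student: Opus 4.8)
The plan is to deduce everything from Lemma \ref{solvability} and the triviality of $\nabla$-modules recorded after it; once these are in hand the statement is essentially formal, with all the genuine analytic input already absorbed into the solvability lemma. First I would verify that $F\colon (M,\nabla)\mapsto M^\nabla$ is a well-defined $k$-linear functor. If $f\colon M\to M'$ is horizontal and $\nabla(s)=0$, then the commutative square defining horizontality gives $\nabla'(f(s))=(f\otimes\mathrm{id})(\nabla(s))=0$, so $f$ restricts to a $k$-linear map $M^\nabla\to (M')^{\nabla'}$; this assignment visibly respects composition and addition of morphisms, so $F$ is $k$-linear on Hom-sets.

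The geometric heart of the argument is the natural evaluation morphism $\epsilon_M\colon M^\nabla\otimes_k k[[t]]\to M$, $s\otimes c\mapsto cs$, where the source carries the trivial connection on its $k[[t]]$-factor. The Leibniz rule shows $\epsilon_M$ is horizontal, and I would argue it is an isomorphism: by Lemma \ref{solvability} the source and target are free $k[[t]]$-modules of the same rank, and a $k$-basis of $M^\nabla$ is a $k[[t]]$-basis of $M$ (this is precisely the triviality statement noted above), so $\epsilon_M$ carries a basis to a basis. This identifies each $M$ naturally with $F(M)\otimes_k k[[t]]$ and at once yields essential surjectivity: a finite-dimensional $V$ is the image of $(V\otimes_k k[[t]],d)$.

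Full faithfulness then follows from $\epsilon$. For faithfulness, a horizontal $f$ is determined by its values on the $k[[t]]$-generating set $M^\nabla$, hence by $F(f)$. For fullness, given a $k$-linear $g\colon M^\nabla\to (M')^{\nabla'}$ I would extend it $k[[t]]$-linearly through the isomorphisms $\epsilon_M$ and $\epsilon_{M'}$ to a map $M\to M'$; this extension is horizontal because the horizontality square vanishes identically on the horizontal sections, and the Leibniz rule propagates the identity to all of $M$. By construction its restriction to $M^\nabla$ recovers $g$.

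It remains to check compatibility with the tensor structure. The $k$-bilinear pairing $M^\nabla\times (M')^{\nabla'}\to M\otimes_{k[[t]]}M'$, $(s,s')\mapsto s\otimes s'$, lands in the horizontal sections for the tensor-product connection and so induces a natural map $F(M)\otimes_k F(M')\to F(M\otimes_{k[[t]]}M')$. Since $k$-bases of $M^\nabla$ and of $(M')^{\nabla'}$ are $k[[t]]$-bases of $M$ and $M'$, the products $s_i\otimes s'_j$ are $k[[t]]$-linearly independent, hence $k$-linearly independent, so the map is injective; comparing dimensions via Lemma \ref{solvability} (each side has dimension $\operatorname{rank}(M)\cdot\operatorname{rank}(M')$) shows it is an isomorphism. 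Together with $F$ sending the trivial module to $k$, this makes $F$ a $k$-linear tensor functor, and combined with the previous paragraphs an equivalence. I expect the only point needing genuine care, as opposed to bookkeeping, to be the verification that the natural maps $\epsilon_M$ and the tensor comparison really land in the horizontal sections and are isomorphisms; everything else is a formal consequence of Lemma \ref{solvability}.
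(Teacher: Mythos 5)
Your proof is correct and follows essentially the same route as the paper: both arguments hinge on the evaluation map $M^{\nabla}\otimes_k k[[t]]\to M$ being a horizontal isomorphism (a consequence of Lemma \ref{solvability}), the paper packaging this as an explicit quasi-inverse functor $V\mapsto (V\otimes_k k[[t]],\mathrm{id}_V\otimes d)$ while you package it as full faithfulness plus essential surjectivity. Your verification of the tensor compatibility is in fact more detailed than the paper's, which dismisses that point as easy.
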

\begin{proof} Since it is hard to find a convenient reference, we indicate the proof  for the sake of the reader. Let $F$ be the functor in the claim above, and let $G$ denote the functor 
$$V\mapsto (V\otimes_kk[[t]],\textrm{id}_V\otimes_kd)$$
from the category of finite dimensional $k$-linear vector spaces to the category of $\nabla$-modules over $k[[t]]$. It is easy to see that $F$ and $G$ are functors of $k$-linear tensor categories, so we only need to see that they are equivalences of categories.  Note that the $k[[t]]$-multiplication induces a natural map
$$M^{\nabla}\otimes_kk[[t]]\longrightarrow M$$
which is an isomorphism by Lemma \ref{solvability}. Similarly the natural map
$$V\longrightarrow
(V\otimes_kk[[t]])^{\textrm{id}_V\otimes_kd}$$
given by the rule $v\mapsto v\otimes_k1$ is an isomorphism.
\end{proof}
We will need a slight variant of Lemma \ref{solvability}, taking into accounts filtrations, but this will follow easily from Corollary \ref{equivalence}.
\begin{notn}\label{1.5} Let $M$ be a $\nabla$-module over $k[[t]]$ equipped with a filtration:
$$0=M_0\subset M_1\subset\cdots\subset M_n=M$$
by sub $\nabla$-modules such that the rank of $M_i$ over $k[[t]]$ is $r_1+\cdots+r_i$. Set $r=r_1+r_2+\cdots+r_n$, and equip the trivial $\nabla$-module $T=k[[t]]^{\oplus r}$ with the filtration:
$$0=T_0\subset T_1\subset\cdots\subset T_n=T,$$
where 
$$T_i=\underbrace{k[[t]]\oplus k[[t]]\oplus\cdots\oplus k[[t]]}
_{r_1+\cdots+r_i}\oplus\underbrace{0\oplus\cdots\oplus0}
_{r_{i+1}+\cdots+r_n}.$$
\end{notn}
\begin{lemma} There is an isomorphism $\phi:M\to T$ of $\nabla$-modules such that $\phi(M_i)=T_i$ for every index $i=1,2,\ldots,n$. 
\end{lemma}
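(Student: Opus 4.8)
The plan is to reduce the statement to elementary linear algebra by transporting the whole filtered situation through the tensor equivalence $F\colon (M,\nabla)\mapsto M^{\nabla}$ of Corollary \ref{equivalence}. Applying $F$ to the given chain of sub-$\nabla$-modules produces a flag of $k$-subspaces
$$0=M_0^{\nabla}\subset M_1^{\nabla}\subset\cdots\subset M_n^{\nabla}=M^{\nabla},$$
and by Lemma \ref{solvability} we have $\dim_k M_i^{\nabla}=\textrm{rank}_{k[[t]]}M_i=r_1+\cdots+r_i$, so in particular $\dim_k M^{\nabla}=r$. On the other side, the horizontal sections of a trivial module are the constants, so $F(T)=T^{\nabla}=k^{\oplus r}$, and $F(T_i)=T_i^{\nabla}=k^{\oplus(r_1+\cdots+r_i)}$ is precisely the standard coordinate flag of $k^{\oplus r}$.

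Next I would choose a $k$-basis $e_1,\ldots,e_r$ of $M^{\nabla}$ adapted to the flag, meaning that $e_1,\ldots,e_{r_1+\cdots+r_i}$ span $M_i^{\nabla}$ for each $i$; such a basis exists because the subspaces are nested with the prescribed dimensions. Sending $e_j$ to the $j$-th standard basis vector defines a $k$-linear isomorphism $\psi\colon M^{\nabla}\to k^{\oplus r}$ with $\psi(M_i^{\nabla})=T_i^{\nabla}$ for all $i$. Since $F$ is an equivalence of categories, it is full and faithful and reflects isomorphisms, so $\psi$ is of the form $F(\phi)$ for a unique horizontal isomorphism $\phi\colon M\to T$; it then remains only to verify that this $\phi$ carries $M_i$ onto $T_i$.

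The point that requires genuine argument, and the one I expect to be the main obstacle, is that agreement on horizontal sections forces agreement of the sub-$\nabla$-modules themselves. Here I would use the natural isomorphism $M^{\nabla}\otimes_k k[[t]]\xrightarrow{\ \sim\ }M$ exhibited in the proof of Corollary \ref{equivalence}. For any sub-$\nabla$-module $N\subseteq M$ the inclusion is horizontal, so applying the same natural isomorphism to $N$ and comparing via naturality shows that $N$ is recovered as the $k[[t]]$-span of $N^{\nabla}$ inside $M$; thus a sub-$\nabla$-module is completely determined by its space of horizontal sections. Applying this to $T$: the sub-$\nabla$-module $\phi(M_i)\subseteq T$ has horizontal sections $\phi(M_i^{\nabla})=\psi(M_i^{\nabla})=T_i^{\nabla}$, while $T_i$ is a sub-$\nabla$-module with the same horizontal sections, so $\phi(M_i)=T_i$ for every $i$. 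This completes the proof.
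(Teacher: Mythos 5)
Your proof is correct and follows essentially the same route as the paper: pass to horizontal sections, note the dimensions of the resulting flag via Lemma \ref{solvability}, pick a flag-preserving linear isomorphism, and lift it back through the equivalence of Corollary \ref{equivalence}. The only difference is that you spell out the final step (that a sub-$\nabla$-module is recovered as the $k[[t]]$-span of its horizontal sections, hence $\phi(M_i)=T_i$), which the paper leaves implicit in the phrase ``the claim now follows from Corollary \ref{equivalence}.''
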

\begin{proof} By taking horizontal sections we get a filtration:
$$0=M_0^{\nabla}\subset M_1^{\nabla}\subset\cdots\subset M_n^{\nabla}=M^{\nabla}$$
of $M^{\nabla}$ by $k$-linear subspaces such that the $k$-dimension of $M_i^{\nabla}$ is $r_1+\cdots+r_i$ by Lemma \ref{solvability}. Similarly
$$0=T_0^{\nabla}\subset T_1^{\nabla}\subset\cdots
\subset T_n^{\nabla}=T^{\nabla}$$
is a filtration of $T^{\nabla}$ such that the $k$-dimension of $T_i^{\nabla}$ is $r_1+\cdots+r_i$. It is a basic fact of linear algebra that there is a $k$-linear isomorphism $f:M^{\nabla}\to
T^{\nabla}$ such that $f(M_i^{\nabla})=T_i^{\nabla}$. The claim now follows from Corollary \ref{equivalence}.
\end{proof}
Let $M$ and $T$ be as in Notation \ref{1.5}. Assume now that for every index $i=1,2,\ldots,n$ an isomorphism:
$$\phi_i:M_i/M_{i-1}\longrightarrow k[[t]]^{\oplus r_i}$$
is given where $k[[t]]$ is equipped with the trivial connection. 
\begin{lemma}\label{filtered_iso} There is an isomorphism $\phi:M\to T$ of $\nabla$-modules such that $\phi(M_i)=T_i$ for every index $i=1,2,\ldots,n$ and the induced isomorphism
$$\phi^i:M_i/M_{i-1}\longrightarrow T_i/T_{i-1}\cong k[[t]]^{\oplus r_i}$$
is $\phi_i$ for every index $i=1,2,\ldots,n$. 
\end{lemma}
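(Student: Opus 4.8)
The plan is to transport the entire problem through the fibre functor $F\colon (M,\nabla)\mapsto M^{\nabla}$ and to solve it there by elementary linear algebra, exactly as in the proof of the previous lemma, but now keeping track of the prescribed isomorphisms on the graded pieces. By Corollary \ref{equivalence} the functor $F$ is a $k$-linear tensor equivalence, hence in particular exact. Applying it to the filtration of $M$ yields a filtration
$$0=M_0^{\nabla}\subset M_1^{\nabla}\subset\cdots\subset M_n^{\nabla}=M^{\nabla}$$
of finite-dimensional $k$-vector spaces, and exactness identifies the graded pieces $(M_i/M_{i-1})^{\nabla}$ with the successive quotients $M_i^{\nabla}/M_{i-1}^{\nabla}$. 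The same applies to $T$, and under the canonical identification $(k[[t]]^{\oplus r_i})^{\nabla}=k^{\oplus r_i}=T_i^{\nabla}/T_{i-1}^{\nabla}$ each prescribed isomorphism $\phi_i$ becomes a $k$-linear isomorphism $f_i\colon M_i^{\nabla}/M_{i-1}^{\nabla}\to T_i^{\nabla}/T_{i-1}^{\nabla}$ on the graded pieces.

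Next I would solve the resulting purely linear-algebraic problem: given the two filtered vector spaces $M^{\nabla}$ and $T^{\nabla}$ together with the isomorphisms $f_i$ on their graded quotients, produce a single filtered isomorphism $f\colon M^{\nabla}\to T^{\nabla}$, that is, one with $f(M_i^{\nabla})=T_i^{\nabla}$, whose induced map on the $i$-th graded piece is exactly $f_i$. This is standard: choose for each $i$ a complement $C_i$ of $M_{i-1}^{\nabla}$ inside $M_i^{\nabla}$ and a complement $D_i$ of $T_{i-1}^{\nabla}$ inside $T_i^{\nabla}$, so that $M^{\nabla}=\bigoplus_i C_i$ and $T^{\nabla}=\bigoplus_i D_i$ with $C_i\cong M_i^{\nabla}/M_{i-1}^{\nabla}$ and $D_i\cong T_i^{\nabla}/T_{i-1}^{\nabla}$, and then set $f=\bigoplus_i f_i$ under these identifications. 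By construction $f(M_i^{\nabla})=\bigoplus_{j\le i}D_j=T_i^{\nabla}$, and $f$ induces $f_i$ on each graded piece.

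Finally I would push $f$ back through $F$. Since $F$ is an equivalence it is fully faithful, so $f$ is the image $\phi^{\nabla}$ of a unique horizontal morphism $\phi\colon M\to T$, which is an isomorphism because $f$ is. The filtration compatibility $\phi(M_i)=T_i$ holds because both are sub-$\nabla$-modules of $T$ with the same space of horizontal sections $T_i^{\nabla}$, and an equivalence of abelian categories induces a bijection on subobjects; the statement $\phi^i=\phi_i$ then follows by applying full faithfulness of $F$ to the induced maps on $M_i/M_{i-1}$, since $(\phi^i)^{\nabla}=f_i=(\phi_i)^{\nabla}$.

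I do not expect a genuine obstacle here, since every $\nabla$-module is trivial and all the short exact sequences involved split. The only point requiring care is the exactness of $F$, which guarantees that taking horizontal sections commutes with forming the graded pieces and hence that the $f_i$ are well-defined isomorphisms of the quotients $M_i^{\nabla}/M_{i-1}^{\nabla}$; once this identification is in place, the construction is forced and entirely formal, relying on nothing beyond Lemma \ref{solvability} and Corollary \ref{equivalence} together with the splitting of filtrations of finite-dimensional vector spaces.
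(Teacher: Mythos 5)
Your proposal is correct and follows essentially the same route as the paper: pass to horizontal sections via Corollary \ref{equivalence}, solve the filtered linear-algebra problem over $k$ by choosing complements, and transport the resulting filtered isomorphism back through the equivalence. You merely spell out in more detail the two steps the paper leaves implicit (the construction of $f$ from the $f_i$, and why full faithfulness forces $\phi(M_i)=T_i$ and $\phi^i=\phi_i$).
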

\begin{proof} Let
$$\phi_i^{\nabla}:(M_i/M_{i-1})^{\nabla}\cong
M_i^{\nabla}/M_{i-1}^{\nabla}
\longrightarrow T^{\nabla}_i/T^{\nabla}_{i-1}\cong
(T_i/T_{i-1})^{\nabla}\cong k^{\oplus r_i}$$
be the $k$-linear isomorphism induced by $\phi_i$ on horizontal sections. It is possible to choose a a $k$-linear isomorphism $f:M^{\nabla}\to T^{\nabla}$ such that
$f(M_i^{\nabla})=T_i^{\nabla}$ and the induced map:
$$M_i^{\nabla}/M_{i-1}^{\nabla}
\longrightarrow T^{\nabla}_i/T^{\nabla}_{i-1}$$
is $\phi^{\nabla}_i$ above for every index $i=1,2,\ldots,n$. The claim now follows from Corollary \ref{equivalence}.
\end{proof}
\begin{defn}\label{framed1} Let $\underline r=(r_1,r_2,\ldots,r_n)$ be a vector consisting of positive integers, and set $r=r_1+r_2+\cdots+r_n$. A framed $\nabla$-module of signature $\underline r$ is a $\nabla$-module $(M,\nabla)$ over $k[[t]]$ equipped with a $k[[t]]$-basis $e_1,e_2,\ldots,e_r$ of $M$ such that
$$M_i=\textrm{ the $k[[t]]$-span of $e_1,e_2,\ldots,e_{r_1+\cdots+r_i}$}$$
is a sub $\nabla$-module, and the image of $e_{r_1+\cdots+r_{i-1}+1},\ldots,e_{r_1+\cdots+r_i}$ in the quotient $M_i/M_{i-1}$ is a $k$-basis of $(M_i/M_{i-1})^{\nabla}$. There is a natural notion of isomorphism of framed $\nabla$-modules of signature $\underline r$, namely, it is an isomorphism of the underlying $\nabla$-modules which maps the $k[[t]]$-bases to each other (respecting the indexing, too). 
\end{defn}
\begin{defn} Let $R$ be a commutative ring with unity. Let $U_{\underline r}(R)$ denote the group of $r\times r$ matrices composed of blocks $U_{ij}$ such that for every pair $(i,j)$ of indices $U_{ij}$ is an $r_i\times r_j$ matrix with coefficients in $R$, moreover $U_{ii}$ is the identity matrix for every $i$ and $U_{ij}$ is the zero matrix for every $i>j$. It is reasonable to call $U_{\underline r}(R)$ the group of unipotent matrices of rank $\underline r$ with coefficients in $R$. 
\end{defn}
\begin{rem} Note that for every framed $\nabla$-module
$(M,\nabla,e_1,e_2,\ldots,e_r)$ of signature $\underline r$ as above there is a unique isomorphism:
$$\phi_i:M_i/M_{i-1}\longrightarrow k[[t]]^{\oplus r_i}$$
which maps the the image of $e_{r_1+\cdots+r_{i-1}+1},\ldots,e_{r_1+\cdots+r_i}$ under the quotient map to the 1st, 2nd,\ldots,$r_i$th basis vector of $k[[t]]^{\oplus r_i}$, respectively. Therefore there is an isomorphism $\phi:M\to T$ of $\nabla$-modules such that $\phi(M_i)=T_i$ and the induced isomorphism
$$\phi^i:M_i/M_{i-1}\longrightarrow T_i/T_{i-1}\cong k[[t]]^{\oplus r_i}$$
is $\phi_i$ for every index $i=1,2,\ldots,n$ by Lemma \ref{filtered_iso}. The matrix of $\phi$ in the basis $e_1,e_2,\ldots,e_r$ is an element of
$U_{\underline r}(k[[t]])$, unique up to multiplication on the right by a matrix in $U_{\underline r}(k)$. We get a well-defined map from the isomorphism classes of framed  $\nabla$-modules of signature $\underline r$ into the set $U_{\underline r}(k[[t]])/U_{\underline r}(k)$ which is obviously a bijection. 
\end{rem}
\begin{example}\label{nabla-log} For every $u\in k[[t]]^*$ consider the following framed $\nabla$-module of signature $(1,1)$. Set $M=k[[t]]^{\oplus 2}$, let $e_1,e_2$ be the 1st, respectively 2nd basis vector of $M$, and let $\nabla$ be the unique connection of $M$ such that
$$\nabla(e_1)=0,\quad\nabla(e_2)=e_1\otimes\frac{du}{u}.$$
Let $\phi:M\cong k[[t]]^{\oplus 2}\to T\cong k[[t]]^{\oplus 2}$ be an isomorphism of the type considered above. Then the matrix $V$ of $\phi$ in the basis $e_1,e_2$ is
$$V=\begin{pmatrix} 1& v \\ 0 & 1\end{pmatrix}\in U_{(1,1)}(k[[t]])
\textrm{ such that}$$
$$d\circ V=\begin{pmatrix} 0& dv \\ 0 & 0\end{pmatrix}=V\circ\nabla=\begin{pmatrix} 1& v \\ 0 & 1\end{pmatrix}\cdot\begin{pmatrix} 0&\frac{du}{u}  \\ 0 & 0\end{pmatrix}=
\begin{pmatrix} 0& \frac{du}{u} \\ 0 & 0\end{pmatrix},$$
and hence
$$dv=\frac{du}{u}.$$
So the isomorphism class of the framed $\nabla$-module $(M,\nabla,e_1,e_2)$ in
$$U_{(1,1)}(k[[t]])/U_{(1,1)}(k)\cong k[[t]]/k$$
is just $\log(u)$ (modulo constants).
\end{example}
The point of the construction above is that we can get the family in the example above as a pull-back of a similar type of object on the formal multiplicative group scheme over the formal spectrum Spf$(k[[t]])$ of $k[[t]]$. This is the description which easily generalises, and which we are going to describe next.
\begin{defn} Let $X$ be a formally smooth $t$-adic formal scheme of finite type over Spf$(k[[t]])$. Then $X$ is also a formally smooth  formal scheme of finite type over Spf$(k)$ via the map Spf$(k)
\to\textrm{Spf}(k[[t]])$ induced by the embedding $k\hookrightarrow k[[t]]$. Therefore the sheaf of continuous K\"ahler differentials
$\Omega^1_{X/k}$ is well-defined, and it is a finite, locally free formal $\mathcal O_X$-module. A $\nabla$-module over $X$ is a pair $(M,\nabla)$, where $M$ is a finite, locally free formal
$\mathcal O_X$-module, and $\nabla$ is a connection on $M$, i.e.~a $k$-linear map of sheaves:
$$\nabla:M\to M\otimes_{\mathcal O_X}\Omega^1_{X/k}$$
satisfying the Leibniz rule
$$\nabla(c\mathbf v)=c\nabla(\mathbf v)+\mathbf v\otimes dc$$
for every open $U\subset X$ and $c\in\Gamma(U,\mathcal O_X),\mathbf v\in\Gamma(U,M)$. 
\end{defn}
\begin{defn} The trivial $\nabla$-module over $X$ is just $\mathcal O_X$ equipped with the differential $d:\mathcal O_X\to\Omega^1_{X/k}\cong\mathcal O_X\otimes_{\mathcal O_X}\Omega^1_{X/k}$. These notions specialise to those introduced in Definition \ref{nabla1} when $X$ is Spf$(k[[t]])$. Moreover horizontal maps of $\nabla$-modules over $X$ is defined the same way as above. We get a $k$-linear category with the usual notion of direct sums, duals and tensor products. Again we will denote by $M$ the ordered pair $(M,\nabla)$ whenever this is convenient. Finally let $M^{\nabla}$ denote the sheaf of horizontal sections of $M$:
$$\Gamma(U,M^{\nabla})\stackrel{\textrm{def}}{=}
\{s\in\Gamma(U,M)\mid\nabla(s)=0\}.$$
Note that $M$ is a trivial $\nabla$-module of rank $n$, that is, isomorphic to the $n$-fold direct sum of $(\mathcal O_X,d)$, if and only if $M^{\nabla}$ is the constant sheaf in $n$-dimensional $k$-linear vector spaces. 
\end{defn}
\begin{defn} It is possible to define the notion of framed $\nabla$-modules in this more general context, too. Let $\underline r$ and
$r$ be as in Definition \ref{framed1}. A framed $\nabla$-module over $X$ of signature $\underline r$ is a $\nabla$-module $(M,\nabla)$ over $X$ equipped with a $\mathcal O_X$-frame $e_1,e_2,\ldots,e_r$ of $M$ such that
$$M_i=\textrm{ the $\mathcal O_X$-span of $e_1,e_2,\ldots,e_{r_1+\cdots+r_i}$}$$
is a sub $\nabla$-module, and the image of $e_{r_1+\cdots+r_{i-1}+1},\ldots,e_{r_1+\cdots+r_i}$ in the quotient $M_i/M_{i-1}$ is a $k$-frame of $(M_i/M_{i-1})^{\nabla}$. 
\end{defn}
\begin{defn} The notion of $\nabla$-modules and framed $\nabla$-modules are natural in $X$. Let $f:X\to Y$ be a morphism of formally smooth formal schemes of finite type over Spf$(k[[t]])$. The morphism $f$ induces an $\mathcal O_X$-linear map $df:f^*(\Omega^1_{Y/k})\to\Omega^1_{X/k}$. The pull-back
$f^*(M,\nabla)$ of a $\nabla$-module $(M,\nabla)$ with respect to $f$ is $f^*(M)$ equipped with the composition:
$$\xymatrix{
f^*(\nabla):f^*(M)\ar[r] & f^*(M\otimes_{\mathcal O_Y}\Omega^1_{Y/k})\cong f^*(M)\otimes_{\mathcal O_X}f^*(\Omega^1_{Y/k})
\ar[r]
& \Omega^1_{X/k},}$$
where the first arrow is the pull-back of $\nabla$ with respect to $f$, and the second is $\textrm{id}_{f^*(M)}\otimes_{\mathcal O_X}df$. The pull-back of a framed $\nabla$-module $(M,\nabla,e_1,\ldots,e_r)$ of signature $\underline r$ on $Y$ with respect to $f$ is the pull-back $f^*(M,\nabla)$ equipped with the $\mathcal O_X$-frame $f^*(e_1),\ldots,f^*(e_r)$. Since pull-back commutes with quotients and the pull-back of horizontal sections are horizontal, this construction is a framed $\nabla$-module of signature $\underline r$ on $X$.
\end{defn}
\begin{defn} For every $X$ as above let $X(k[[t]])$ denote the set of sections $f:\textrm{Spf}(k[[t]])\to X$. Let $\mathbf M=(M,\nabla,e_1,\ldots,e_r)$ be a framed $\nabla$-module of signature $\underline r$ on $X$. Then for every $f\in X(k[[t]])$ the pull-back of $\mathbf M$ with respect to $f$ is a framed $\nabla$-module of signature $\underline r$ over $k[[t]]$. Taking isomorphism classes we get a function
$$\int_{\mathbf M}:X(k[[t]])\longrightarrow U_{\underline r}(k[[t]])
/U_{\underline r}(k)$$
which we will call the line integral of $\mathbf M$. 
\end{defn}
\begin{example}\label{nabla_global} Let $X$ be Spf$(k[[t,x]])$. In order to give a $\nabla$-module on $X$, it is sufficient to give a $k$-linear map:
$$\nabla:k[[t,x]]^{\oplus2}\longrightarrow 
k[[t,x]]^{\oplus2}\otimes_{k[[t,x]]}\Omega^1_{k[[t,x]]/k}$$
satisfying the Leibniz rule, where
$$\Omega^1_{k[[t,x]]/k}=k[[t,x]]\cdot dt\oplus k[[t,x]]\cdot dx,$$
with differential $d:k[[t,x]]\to\Omega^1_{k[[t,x]]/k}$ given by:
$$d\big(\sum_{ij}a_{ij}t^ix^j\big)=\sum_{ij}
(ia_{ij}t^{i-1}x^jdt+ja_{ij}t^ix^{j-1}dx).$$
Let $e_1,e_2$ be the 1st, respectively 2nd basis vector of $k[[t,x]]^{\oplus2}$, and let $\nabla$ be the unique connection of $k[[t,x]]^{\oplus2}$ such that
$$\nabla(e_1)=0,\quad\nabla(e_2)=e_1\otimes\frac{dx}{1+x},$$
where $(1+x)^{-1}=\sum_{i=0}^{\infty}(-1)^ix^i$. Equipped with the frame $e_1,e_2$ this $\nabla$-module is framed of signature $(1,1)$. Let $\mathbf M$ denote this object. Note that sections of $X\to\textrm{Spf}(k[[t]])$ are exactly continuous $k[[t]]$-algebra homomorphisms $\psi:k[[t,x]]\to k[[t]]$. Every such $\psi$ is determined by $\psi(1+x)$ which must be an invertible element of $k[[t]]$. Conversely for every $u\in k[[t]]^*$ there is a unique such $\psi_u:k[[t,x]]\to k[[t]]$ with the property $\psi_u(1+x)=u$. The  pull-back of $\mathbf M$ with respect to $\psi_u$ is just the framed $\nabla$-module appearing in Example \ref{nabla-log}. We get that the formal line integral:
$$\int_{\mathbf M}:X(k[[t]])\cong k[[t]]^*\longrightarrow U_{(1,1)}(k[[t]])
/U_{(1,1)}(k)\cong k[[t]]/k$$
is just the formal logarithm.
\end{example}

\section{The $p$-adic logarithm for Laurent series fields of characteristic $p$}

The perfect reference for the background material in this section and the next is Kedlaya's book \cite{Ke}. 
\begin{notn} Let $k$ a perfect field of characteristic $p>0$ and let $\mathcal O$ denote the ring of Witt vectors over $k$.  Let $v_p$ denote the valuation on $\mathcal O$ normalised so that $v_p(p) = 1$. For $x \in \mathcal O$, let $\overline{x}$ denote its reduction in $k$. Let $\Gamma$ denote the ring of bidirectional power series:
$$\Gamma=\big\{\sum_{i \in \mathbb Z} x_i u^i\mid x_i \in \mathcal O,\ \ 
\lim_{i \to -\infty}v_p(x_i)=\infty\big\}.$$
Then $\Gamma$ is a complete discrete valuation ring whose residue field we could identify with $k((t))$ by identifying the reduction of $\sum x_i u^i$ with $\sum \overline{x}_it^i$ (see page 263 of \cite{Ke}). Let $K=\mathcal O[\frac{1}{p}]$ and $\mathcal E=\Gamma[\frac{1}{p}]$; they are the fraction fields of the rings $\mathcal O$ and $\Gamma$, respectively.
\end{notn}
\begin{defn} Let $\Omega^1_{\mathcal E}$ be the free module over $\mathcal E$ generated by a symbol $du$, and define the derivation $d:\mathcal E\to \Omega^1_{\mathcal E}$ by the formula
$$d\big( \sum_j x_j u^j \big) = \big( \sum_j j x_j u^{j-1} \big)\,du.$$
We define the first de Rham cohomology group $H^1_{dR}(\mathcal E)$ of $\mathcal E$ as the quotient
$\Omega^1_{\mathcal E}/d\mathcal E$. 
Note that the dlog map: 
$$x\mapsto\frac{dx}{x},\quad\mathcal E^*\to \Omega^1_{\mathcal E}$$
followed by the quotient map $\Omega^1_{\mathcal E}\to H^1_{dR}(\mathcal E)$ furnishes a homomorphism
$\Gamma^*\to H^1_{dR}(\mathcal E)$ which we will denote by dlog by slight abuse of notation.
\end{defn}
\begin{lemma}\label{factors_thru1} The homomorphism $\textrm{\rm dlog}:\Gamma^*\to H^1_{dR}(\mathcal E)$ factors through the reduction map $\overline{\cdot}:\Gamma^*\to k((t))^*$.
\end{lemma}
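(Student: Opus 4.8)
The plan is to show that the kernel of the reduction map $\overline{\cdot}\colon\Gamma^*\to k((t))^*$ is contained in the kernel of $\mathrm{dlog}$; since both maps are group homomorphisms, this containment gives the desired factorisation (and, as reduction is surjective on units, an induced homomorphism on all of $k((t))^*$). First I would identify the kernel of reduction explicitly. Because $\Gamma$ is a complete discrete valuation ring with uniformiser $p$ and residue field $k((t))$, the reduction map is reduction modulo $p$, so an element $x\in\Gamma^*$ satisfies $\overline{x}=1$ precisely when $x\in 1+p\Gamma$. Every element of $1+p\Gamma$ is automatically a unit, since its reduction is the nonzero element $1$. Thus the whole lemma reduces to showing $\mathrm{dlog}(1+py)=0$ in $H^1_{dR}(\mathcal E)$ for every $y\in\Gamma$.

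To prove this I would construct a genuine logarithm in $\Gamma$. Define
$$\log(1+py)=\sum_{n=1}^{\infty}\frac{(-1)^{n-1}(py)^n}{n}.$$
The key convergence estimate is that $v_p(p^n/n)=n-v_p(n)$ tends to $\infty$, so that each term $\frac{(py)^n}{n}=\frac{p^n}{n}y^n$ lies in $\Gamma$ and has valuation at least $n-v_p(n)+n\cdot v(y)\to\infty$, where $v$ denotes the valuation on $\Gamma$ and $v(y)\ge 0$. Hence the partial sums form a $p$-adic Cauchy sequence, and since $\Gamma$ is $p$-adically complete the series converges to an element of $\Gamma\subset\mathcal E$.

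Next I would verify the identity $d\log(1+py)=\frac{d(1+py)}{1+py}$ by differentiating term by term. Differentiating the partial sum $L_N=\sum_{n=1}^N\frac{(-1)^{n-1}(py)^n}{n}$ gives $dL_N=\big(\sum_{n=1}^N(-py)^{n-1}\big)\,d(py)$, and since $v(py)\ge 1>0$ the geometric series $\sum_{m\ge 0}(-py)^m$ converges $p$-adically to $(1+py)^{-1}$. Passing to the limit---which is legitimate because $d$ is continuous for the $p$-adic topology, as it does not decrease the $p$-adic valuation of the coefficients---yields $d\log(1+py)=\frac{d(py)}{1+py}=\frac{dx}{x}$ with $x=1+py$. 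Consequently $\frac{dx}{x}\in d\Gamma\subset d\mathcal E$, so $\mathrm{dlog}(x)=0$ in $H^1_{dR}(\mathcal E)$, completing the argument.

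The step I expect to require the most care is the convergence of the logarithm inside $\Gamma$ rather than merely in its fraction field $\mathcal E$: because elements of $\Gamma$ are bidirectional series subject to the condition $\lim_{i\to-\infty}v_p(x_i)=\infty$, one must check that taking powers and multiplying by $p^n/n$ preserves membership in $\Gamma$ and that the relevant topology is genuinely the $p$-adic one for which $\Gamma$ is complete. Once this is pinned down, the continuity of $d$ and the term-by-term differentiation are routine.
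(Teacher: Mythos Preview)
Your proposal is correct and follows essentially the same approach as the paper: both identify the kernel of reduction as $1+p\Gamma$, write down the $p$-adic logarithm series, invoke $v_p(p^n/n)\to\infty$ for convergence in $\Gamma$, and use continuity of $d$ to differentiate term by term. The only differences are cosmetic (you write $1+py$ where the paper writes $1-py$, and you spell out the partial-sum argument more explicitly).
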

\begin{proof} We need to show that for every $x\in\Gamma^*$ of the form $1-py$ with $y\in\Gamma$ we have dlog$(x)\in d\mathcal E$. Set
$$z=-\sum_{n=1}^{\infty}\frac{(py)^n}{n}.$$
Since $0\leq v_p(p^n)-v_p(n)\to\infty$ as $n\to\infty$, the infinite sum above converges in the $p$-adic topology, and hence $z\in\Gamma$ is well-defined. Differentiation is continuous with respect to the $p$-adic topology, so 
$$dz=\sum_{n=1}^{\infty}(py)^{n-1}d(-py)=(1-py)^{-1}d(1-py)=\textrm{dlog}(x).$$
\end{proof}
Let dlog also denote the induced homomorphism $k((t))^*\to H^1_{dR}(\mathcal E)$. This map is trivial restricted to $k^*$, for example because $\textrm{\rm dlog}:\Gamma^*\to H^1_{dR}(\mathcal E)$ is trivial on $\mathcal O^*$. The basic result about this construction is the following
\begin{thm}\label{p-adic_log1} The kernel of $\textrm{\rm dlog}:k((t))^*\to H^1_{dR}(\mathcal E)$ is $k^*$.
\end{thm}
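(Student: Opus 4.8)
The plan is to exploit the fact that, by Lemma \ref{factors_thru1}, $\mathrm{dlog}$ already factors through the reduction $k((t))^*$, so I am free to choose any convenient lift in $\Gamma^*$ of a given $\overline f$. The inclusion $k^*\subseteq\ker(\mathrm{dlog})$ is recorded just above the statement, so the whole content is the reverse inclusion. I would split the argument into a valuation part, handled by a residue, and a unit part, handled by a direct estimate on the formal logarithm.

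First I would introduce the residue homomorphism $\mathrm{Res}:\Omega^1_{\mathcal E}\to K$ sending $\sum_i c_iu^i\,du$ to $c_{-1}$. Since the coefficient of $u^{-1}\,du$ in any $dg$ is $0\cdot g_0=0$, the map $\mathrm{Res}$ kills exact forms and descends to $\mathrm{Res}:H^1_{dR}(\mathcal E)\to K$. Writing $\overline f=t^m\overline a_0(1+\overline h)$ with $m=\mathrm{ord}_t\overline f$, $\overline a_0\in k^*$ and $\overline h\in tk[[t]]$, additivity of $\mathrm{dlog}$ and of $\mathrm{Res}$ gives $\mathrm{Res}(\mathrm{dlog}\,\overline f)=m$: the factor $t$ lifts to $u$ with $\mathrm{Res}(du/u)=1$, the factor $\overline a_0$ contributes $0$ because $\mathrm{dlog}$ is trivial on $k^*$, and lifting $1+\overline h$ to $1+h$ with $h\in u\mathcal O[[u]]$ gives $d(1+h)/(1+h)\in\mathcal O[[u]]\,du$, which has no $u^{-1}$ term. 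Hence $\mathrm{dlog}\,\overline f=0$ forces $m=0$, and dividing out $\overline a_0\in k^*$ reduces me to showing that $\mathrm{dlog}(1+\overline h)=0$ with $\overline h\in tk[[t]]$ implies $\overline h=0$.

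Next I would translate exactness into convergence. With $h=\sum_{i\ge1}h_iu^i\in u\mathcal O[[u]]$ any lift of $\overline h$ and $f=1+h$, the form $\omega:=df/f=\sum_{i\ge0}c_iu^i\,du$ has $c_i\in\mathcal O$ and residue $0$, so its unique formal primitive with zero constant term is exactly $\log(1+h)=\sum_{n\ge1}\frac{(-1)^{n-1}}{n}h^n$, whose coefficients $g_j=[u^j]\log(1+h)$ are well-defined elements of $K$, each a finite sum. Unwinding the definition $\mathcal E=\Gamma[\tfrac1p]$, the form $\omega$ is exact, i.e.\ $\mathrm{dlog}(1+\overline h)=0$, precisely when $\log(1+h)\in\mathcal E$, equivalently when $\inf_j v_p(g_j)>-\infty$. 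So it suffices to show that if $\overline h\ne0$ then the valuations $v_p(g_j)$ are unbounded below.

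Finally I would carry out the estimate. Let $m=\mathrm{ord}_t\overline h\ge1$, so $h=au^m+(\text{higher order})$ with $a\in\mathcal O^*$, and I would isolate $g_{mp^k}$. Only the terms $h^n$ with $n\le p^k$ reach degree $mp^k$, and among $1\le n\le p^k$ the value $v_p(n)=k$ is attained only at $n=p^k$. For $n=p^k$ the lowest-degree coefficient of $h^{p^k}$ is $a^{p^k}$, a unit, so that term equals $\frac{(-1)^{p^k-1}}{p^k}a^{p^k}$ with $v_p=-k$, while every other contributing term has $v_p\ge-(k-1)$ because $[u^{mp^k}]h^n\in\mathcal O$ and $v_p(n)\le k-1$. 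By the ultrametric inequality $v_p(g_{mp^k})=-k\to-\infty$, so $\log(1+h)\notin\mathcal E$ and $\mathrm{dlog}(1+\overline h)\ne0$, a contradiction, giving $\overline h=0$ and hence $\overline f=\overline a_0\in k^*$. The one genuinely delicate point, which I expect to be the main obstacle, is ruling out cancellation of this singular leading term; it is resolved precisely by the observation that $n=p^k$ is the unique index $\le p^k$ with $v_p(n)=k$, after which everything else is routine bookkeeping.
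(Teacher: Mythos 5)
Your proposal is correct and follows essentially the same route as the paper: the residue map $\mathrm{res}\circ\mathrm{dlog}=\deg$ reduces the problem to units in $1+tk[[t]]$, and the contradiction comes from the coefficient of $u^{mp^k}$ in the formal primitive of $df/f$ having $p$-adic valuation exactly $-k$ (the paper phrases this as $v_p(z_{mp^i})=-i$ for $z=-\sum_n (au^m+bu^{m+1})^n/n$, with the same observation that $n=p^k$ is the unique index contributing valuation $-k$). The only cosmetic difference is that you argue directly with the coefficients of the formal antiderivative and the exactness criterion in $\mathcal E$, whereas the paper introduces the auxiliary element $z\in K[[u]]$ and deduces $z\in\mathcal E_+$ before deriving the contradiction.
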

\begin{proof} Let $\deg:k((t))^*\to\mathbb Z$ be the discrete valuation on $k((t))$ normalised so that $\deg(t)=1$. We define the residue map on $\Omega^1_{\mathcal E}$ as follows:
$$\sum_j x_j u^j du\mapsto x_{-1},\quad\Omega^1_{\mathcal E}\to K.$$
Since there is no term of degree $-1$ in any exact form $dx\in d\mathcal E$, we get a well-defined homomorphism res$:H^1_{dR}(\mathcal E)\to K$. We will need the following:
\begin{lemma}\label{residue} The diagram commutes:
$$\xymatrix{ k((t))^* \ar^-{\textrm{\rm dlog}}[r] \ar^{\deg}[d] & H^1_{dR}(\mathcal E)
\ar^{\textrm{\rm res}}[d] \\
\mathbb Z  \ar@{^{(}->}[r] & K.}$$
\end{lemma}
\begin{proof} Clearly $\textrm{res}\circ\textrm{dlog}(t)=1$. Now let $x\in k[[t]]^*$. Then $x$ has a lift to $(\Gamma_+)^*\subset\Gamma^*$, where $\Gamma_+$ denotes the subring
$$\Gamma_+=\big\{\sum_{i \in \mathbb N} x_i u^i\mid x_i \in \mathcal O\}$$
of $\Gamma$. By definition $\textrm{res}\circ\textrm{dlog}((\Gamma_+)^*)=0$. Since the group $k((t))^*$ is generated by $t$ and $k[[t]]^*$, the claim now follows, as all arrows in the diagram are homomorphisms.
\end{proof}
Let us return to the proof of Theorem \ref{p-adic_log1}. Let $x\in k((t))^*$ be such that dlog$(x)=0$, but $x\not\in k^*$. By the above $x\in k[[t]]^*$. We may assume without loss of generality that $x\in 1+tk[[t]]$ by multiplying $x$ with an element of $k^*$. Choose a lift $y\in(\Gamma_+)^*$ of $x$. We may assume that
$$y=1-au^m-bu^{m+1},$$
where $m$ is a positive integer, with $a\in\mathcal O^*$ and $b\in\Gamma_+$. Set
$$z=-\sum_{n=1}^{\infty}\frac{(au^m+bu^{m+1})^n}{n}.$$
The infinite sum above converges with respect to the topology generated by the ideal $(u)\triangleleft K[[u]]$, so $z$ is a well-defined element of $K[[u]]$.

Let $R$ be one of the rings $K[[t]]$ and $\mathcal E_+=\Gamma_+[\frac{1}{p}]$, and let $\Omega^1_R$ be the free module over $R$ generated by a symbol $du$, and define the derivation $d:R\to \Omega^1_R$ by the formula
$$d\big( \sum_j x_j u^j \big) = \big( \sum_j j x_j u^{j-1} \big)\,du.$$
Clearly $\Omega^1_{\mathcal E_+}\subset\Omega^1_{K[[t]]}$. Let $v\in\mathcal E$ be such that $dv=\textrm{dlog}(y)$. Since $\textrm{dlog}(y)\in\Omega^1_{\mathcal E_+}$ we have $v\in\mathcal E_+$. Note that differentiation is continuous with respect to the $(u)$-adic topology, so 
\begin{eqnarray}
dz &=& \sum_{n=1}^{\infty}(au^m+bu^{m+1})^{n-1}d(-au^m-bu^{m+1})
\nonumber\\
&=& (1-au^m-bu^{m+1})^{-1}d(1-au^m-bu^{m+1})=
\textrm{dlog}(y).\nonumber
\end{eqnarray}
Therefore $dv=dz$ and hence $v-z\in K$. We get that $z\in\mathcal E_+$, too. But this is a contradiction since, if 
$$z=\sum_{i=0}^{\infty}z_iu^i,$$ 
then $v_p(z_{mp^i})=-i$ for every positive integer $i$. We can see the latter as follows. By definition:
$$z\equiv-\sum_{n=1}^{p^i-1}\frac{(au^m+bu^{m+1})^n}{n}+
\frac{(au^m)^{p^i}}{p^i}\mod(u^{mp^i+1}).$$
In the first summand all coefficients have $p$-adic valuation
$\geq1-i$, while in the second the coefficient of $u^{mp^i}$ has valuation $-i$.
\end{proof}
Next we are going to give a slightly more convoluted variant of this construction, which nevertheless ties it up better with the general theory of line integrals over Laurent series fields of characteristic $p$.
\begin{defn} Let $\Gamma^{\dagger}$ denote the subring:
$$\Gamma^{\dagger}=\big\{\sum_{i \in \mathbb Z} x_i u^i\mid
x_i \in \mathcal O, \ \ \liminf_{i \to -\infty}\frac{v_p(x_i)}{-i} > 0\big\}
\subset\Gamma.$$
The latter is also a discrete valuation ring with residue field $k((t))$, although it is not complete (see Definition 15.1.2 and Lemma 15.1.3 of \cite{Ke} on page 263). Let $\mathcal E^{\dagger}=\Gamma^{\dagger}[\frac{1}{p}]$. Then $\mathcal E^{\dagger}$ is the fraction field of the ring $\Gamma^{\dagger}$. Similarly to the above let $\Omega^1_{\mathcal E^{\dagger}}$ be the module of continuous K\"ahler differentials of $\mathcal E^{\dagger}$, i.e.~the free module over $\mathcal E^{\dagger}$ generated by a symbol $du$, equipped with the derivation $d:\mathcal E^{\dagger}\to \Omega^1_{\mathcal E^{\dagger}}$ given by
$$d\big( \sum_j x_j u^j \big) = \big( \sum_j j x_j u^{j-1} \big)\,du.$$
We define the first de Rham cohomology group $H^1_{dR}(\mathcal E^{\dagger})$ of $\mathcal E^{\dagger}$ as the quotient
$\Omega^1_{\mathcal E^{\dagger}}/d\mathcal E^{\dagger}$. 
Note that the dlog map: 
$$x\mapsto\frac{dx}{x},\quad(\mathcal E^{\dagger})^*\to \Omega^1_{\mathcal E^{\dagger}}$$
followed by the quotient map $\Omega^1_{\mathcal E^{\dagger}}\to H^1_{dR}(\mathcal E^{\dagger})$ furnishes a homomorphism
$(\Gamma^{\dagger})^*\to H^1_{dR}(\mathcal E^{\dagger})$ which we will denote by $\textrm{dlog}^{\dagger}$.
\end{defn}
\begin{lemma} The homomorphism $\textrm{\rm dlog}^{\dagger}:(\Gamma^{\dagger})^*\to H^1_{dR}(\mathcal E^{\dagger})$ factors through the reduction map $\overline{\cdot}:(\Gamma^{\dagger})^*\to k((t))^*$.
\end{lemma}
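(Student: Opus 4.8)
The plan is to imitate the proof of Lemma \ref{factors_thru1}, the only genuinely new ingredient being that the resulting logarithm must be shown to be overconvergent, i.e.\ to lie in $\mathcal E^{\dagger}$ rather than merely in $\mathcal E$. First I would reduce to a single family of units. Since $\Gamma^{\dagger}$ is a discrete valuation ring with uniformizer $p$ and residue field $k((t))$, the kernel of the reduction map $(\Gamma^{\dagger})^{*}\to k((t))^{*}$ is exactly $1+p\Gamma^{\dagger}$. As $\textrm{dlog}^{\dagger}$ and $\overline{\cdot}$ are homomorphisms, it factors through the reduction if and only if it kills this kernel, so it suffices to prove that $\textrm{dlog}^{\dagger}(1-py)\in d\mathcal E^{\dagger}$ for every $y\in\Gamma^{\dagger}$. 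Exactly as before I would set
$$z=-\sum_{n=1}^{\infty}\frac{(py)^{n}}{n},$$
which by the computation in the proof of Lemma \ref{factors_thru1} converges $p$-adically to an element of $\Gamma$ and satisfies $dz=\textrm{dlog}^{\dagger}(1-py)$ by continuous term-by-term differentiation. The whole problem is thus to upgrade the conclusion $z\in\Gamma$ to $z\in\Gamma^{\dagger}$.

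For this I would introduce, for a real parameter $r>0$, the Gauss valuation $w_{r}\big(\sum_{i}x_{i}u^{i}\big)=\inf_{i}\big(v_{p}(x_{i})+ri\big)$, which is multiplicative wherever it is finite and satisfies $w_{r}(p)=1$ and $w_{r}(c)=v_{p}(c)$ for constants $c\in K$. The overconvergence of $y$ supplies $\epsilon>0$ and an integer $N$ with $v_{p}(x_{i})\geq\epsilon|i|$ for $i\leq-N$; combined with $x_{i}\in\mathcal O$, so $v_{p}(x_{i})\geq0$, this yields for any $0<r<\epsilon$ the lower bound $w_{r}(y)\geq-r(N-1)$, the only possibly negative contributions coming from the finitely many indices $-N<i\leq-1$. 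Choosing $r$ small enough that in addition $r(N-1)<1$ forces $w_{r}(y)>-1$, hence $w_{r}(py)=1+w_{r}(y)>0$.

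With such an $r$ fixed, I would estimate the general term by
$$w_{r}\Big(\frac{(py)^{n}}{n}\Big)=n\,w_{r}(py)-v_{p}(n)=n\big(1+w_{r}(y)\big)-v_{p}(n),$$
which tends to $+\infty$ as $n\to\infty$ because $1+w_{r}(y)>0$ while $v_{p}(n)=O(\log n)$. Consequently the partial sums of $z$ form a Cauchy sequence for $w_{r}$, the quantity $c=\inf_{n}w_{r}\big((py)^{n}/n\big)$ is a finite minimum, and $w_{r}(z)\geq c$. Since the $w_{r}$-limit and the $p$-adic limit agree coefficientwise, this bound applies to the element $z\in\Gamma$ already produced: writing $z=\sum_{i}z_{i}u^{i}$ we get $v_{p}(z_{i})\geq c-ri$, so for $i<0$ that $v_{p}(z_{i})/(-i)\geq r+c/(-i)\to r$. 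Hence $\liminf_{i\to-\infty}v_{p}(z_{i})/(-i)\geq r>0$, and together with $z_{i}\in\mathcal O$ this is precisely the statement that $z\in\Gamma^{\dagger}\subset\mathcal E^{\dagger}$, so $dz\in d\mathcal E^{\dagger}$, as required.

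I expect the main obstacle to be exactly this overconvergence bookkeeping. In the bounded Robba setting the $p$-adic convergence that sufficed in Lemma \ref{factors_thru1} no longer guarantees membership in the smaller ring $\Gamma^{\dagger}$, and one must control the negative-degree coefficients through a Gauss valuation. The delicate points are that the parameter $r$ governing the overconvergence of $z$ has to be chosen depending on $y$, below both the rate $\epsilon$ and the threshold $1/(N-1)$, and that one must verify that the Gauss-norm limit coincides coefficientwise with the $p$-adic limit so that the estimate $w_{r}(z)\geq c$ transfers to the already-constructed $z\in\Gamma$.
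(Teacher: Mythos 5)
Your proposal is correct, and it follows the same overall strategy as the paper: reduce to units of the form $1-py$ with $y\in\Gamma^{\dagger}$, take the same series $z=-\sum_{n\geq1}(py)^n/n$ from Lemma \ref{factors_thru1}, and upgrade $z\in\Gamma$ to $z\in\Gamma^{\dagger}$. The difference lies in how that last step is carried out. The paper invokes the analytic interpretation of $\mathcal E^{\dagger}$ as bounded holomorphic functions on an annulus of radii $(1-\epsilon,1)$ and asserts that the series converges in the supremum norm there, leaving the details to the reader; you instead run the estimate explicitly through the Gauss valuations $w_r$, which are exactly the algebraic avatars of those supremum norms on circles of radius $p^{-r}$. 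Your version is in fact slightly more careful on one point the paper glosses over: since $y\in\Gamma^{\dagger}$ need not satisfy $|py|<1$ on the whole annulus where it converges, one must shrink the radius (your condition $r(N-1)<1$, ensuring $w_r(py)>0$) before the geometric-type estimate $w_r((py)^n/n)=n(1+w_r(y))-v_p(n)\to\infty$ applies. Your remark that the $w_r$-limit and the $p$-adic limit agree coefficientwise, so that the bound transfers to the already-constructed $z$, is also a legitimate point that the paper does not spell out. In short: same proof, with the convergence bookkeeping made self-contained rather than delegated to the analytic description of $\mathcal E^{\dagger}$ in Kedlaya's book.
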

\begin{proof} We need to show that for every
$x\in(\Gamma^{\dagger})^*$ of the form $1-py$ with $y\in\Gamma^{\dagger}$ we have dlog$(x)\in d\mathcal E^{\dagger}$. It will be sufficient to prove that the element
$$z=-\sum_{n=1}^{\infty}\frac{(py)^n}{n}\in\Gamma$$
is actually in $\Gamma^{\dagger}$. Note that $\mathcal E^{\dagger}$ is the ring of the bidirectional (or Laurent) expansions of bounded holomorphic functions over $K$ on an open annulus of outer radius $1$ and inner radius $1-\epsilon$, for some $\epsilon\in(0,1)$ (see page 263 of \cite{Ke}). If $y$ is such a function then the infinite sum defining
$z$ converges with respect to the supremum norm and defines a bounded holomorphic function over $K$ on the annulus of outer radius $1$ and inner radius $1-\epsilon$. The claim is now clear.
\end{proof}
Let $\textrm{dlog}^{\dagger}$ also denote the induced homomorphism $k((t))^*\to H^1_{dR}(\mathcal E^{\dagger})$. This map is trivial restricted to $k^*$, for example because $\textrm{\rm dlog}^{\dagger}:(\Gamma^{\dagger})^*\to H^1_{dR}(\mathcal E^{\dagger})$ is trivial on $\mathcal O^*$. Then we have the following variant of Theorem \ref{p-adic_log1} above:
\begin{thm}\label{p-adic_log2} The kernel of $\textrm{\rm dlog}^{\dagger}:k((t))^*\to H^1_{dR}(\mathcal E^{\dagger})$ is
$k^*$.
\end{thm}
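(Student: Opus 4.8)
The plan is to deduce Theorem \ref{p-adic_log2} from Theorem \ref{p-adic_log1} by a comparison of the two de Rham cohomology groups, rather than by redoing the explicit series computation. The inclusion of rings $\mathcal E^{\dagger}\subset\mathcal E$ induces an inclusion $\Omega^1_{\mathcal E^{\dagger}}\hookrightarrow\Omega^1_{\mathcal E}$ of differential modules: both are free on the single symbol $du$, the coefficient fields inject, and the derivation $d$ is given by the same term-by-term formula on both, so the inclusion commutes with $d$. Consequently exact forms are carried to exact forms, $d\mathcal E^{\dagger}$ lands inside $d\mathcal E$, and we obtain a natural homomorphism
$$\iota_*:H^1_{dR}(\mathcal E^{\dagger})\longrightarrow H^1_{dR}(\mathcal E)$$
on the quotients. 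I stress that I will not need $\iota_*$ to be injective.

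Next I would check that $\iota_*$ intertwines the two logarithms, i.e.~that
$$\iota_*\circ\textrm{dlog}^{\dagger}=\textrm{dlog}:k((t))^*\longrightarrow H^1_{dR}(\mathcal E).$$
This is a direct unwinding of the definitions. Given $x\in k((t))^*$, choose a lift $\widetilde x\in(\Gamma^{\dagger})^*$; then $\widetilde x$ is simultaneously a lift of $x$ in $\Gamma^*$, and both $\textrm{dlog}^{\dagger}(x)$ and $\textrm{dlog}(x)$ are represented by the very same differential $d\widetilde x/\widetilde x$, the first in $H^1_{dR}(\mathcal E^{\dagger})$ and the second in $H^1_{dR}(\mathcal E)$. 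That these classes are independent of the chosen lift is precisely the content of the two factorisation lemmas already established, so the triangle commutes.

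With the compatibility in hand the theorem is immediate. If $x\in k((t))^*$ lies in the kernel of $\textrm{dlog}^{\dagger}$, then $\textrm{dlog}(x)=\iota_*(\textrm{dlog}^{\dagger}(x))=0$, so $x$ lies in the kernel of $\textrm{dlog}$, which equals $k^*$ by Theorem \ref{p-adic_log1}. The reverse inclusion $k^*\subseteq\ker(\textrm{dlog}^{\dagger})$ has already been recorded in the discussion preceding the statement. Hence $\ker(\textrm{dlog}^{\dagger})=k^*$.

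The only point that requires any care, and hence the main (if modest) obstacle, is confirming that $\iota_*$ is genuinely well defined, that is, that the inclusion $\Omega^1_{\mathcal E^{\dagger}}\hookrightarrow\Omega^1_{\mathcal E}$ descends to cohomology; this is why I isolate the compatibility of $d$ with the inclusion at the very start. Alternatively one could adapt the explicit computation in the proof of Theorem \ref{p-adic_log1} mutatis mutandis: vanishing of $\textrm{dlog}^{\dagger}(y)$ in $H^1_{dR}(\mathcal E^{\dagger})$ forces the primitive $v$ to lie in $\mathcal E^{\dagger}$, whence the auxiliary series $z$ would lie in $\mathcal E^{\dagger}$ as well, yet the estimate $v_p(z_{mp^i})=-i$ shows its coefficients are $p$-adically unbounded and so $z\notin\Gamma^{\dagger}[\frac1p]=\mathcal E^{\dagger}$, the desired contradiction. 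The comparison argument above is shorter, and it is the one I would prefer.
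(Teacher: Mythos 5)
Your argument is correct and is essentially identical to the paper's own proof, which also constructs the map $H^1_{dR}(\mathcal E^{\dagger})\to H^1_{dR}(\mathcal E)$ induced by the inclusions $\Omega^1_{\mathcal E^{\dagger}}\subset\Omega^1_{\mathcal E}$ and $d\mathcal E^{\dagger}\subset d\mathcal E$, notes the resulting commutative triangle with $\textrm{dlog}$ and $\textrm{dlog}^{\dagger}$, and deduces the claim from Theorem \ref{p-adic_log1}. You have merely written out the verifications the paper leaves implicit.
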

\begin{proof} Note that there is a commutative diagram:
$$\xymatrix{ k((t))^* \ar^-{\textrm{\rm dlog}^{\dagger}}[r] \ar_{\textrm{\rm dlog}}[dr] & H^1_{dR}(\mathcal E^{\dagger})
\ar[d] \\
& H^1_{dR}(\mathcal E),}$$
where the right vertical map is induced by the pair of inclusions
$\Omega^1_{\mathcal E^{\dagger}}\to\Omega^1_{\mathcal E}$ and
$d\mathcal E^{\dagger}\to d\mathcal E$. Now the claim immediately follows from Theorem \ref{p-adic_log1}.
\end{proof}
\begin{defn} Let $\mathcal R$ denote the ring of bidirectional power series:
$$\mathcal R=\big\{\sum_{i \in \mathbb Z} x_i u^i\mid x_i \in 
\mathcal O[\frac{1}{p}], \ \ \liminf_{i \to -\infty}\frac{v_p(x_i)}{-i}>0,\ \ 
\liminf_{i \to+\infty}\frac{v_p(x_i)}{i}\geq 0\}.$$
(See Definition 15.1.4 of \cite{Ke} on page 264.) Let $\mathcal R_+$ denote its subring:
$$\mathcal R_+=\mathcal R\cap\big\{\sum_{i \in \mathbb N} x_i u^i\mid x_i \in 
\mathcal O[\frac{1}{p}]\big\}.$$
Clearly $\mathcal E_+\subset\mathcal R_+$ and $\mathcal E^{\dagger}
\subset\mathcal R$. Note that we may define the continuous
K\"ahler differentials and the first de Rham cohomology group of the rings $\mathcal R$ and $\mathcal R_+$ similarly to the above, and we will use similar notation to denote them, too.
\end{defn}
The reason we like the ring $\mathcal R_+$ is the following very well-known claim:
\begin{lemma}\label{solvability2} The group $H^1_{dR}(\mathcal R_+)$ is trivial.
\end{lemma}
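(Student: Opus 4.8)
The goal is to show that the map $d\colon \mathcal R_+\to\Omega^1_{\mathcal R_+}$ is surjective, since $H^1_{dR}(\mathcal R_+)$ is by definition its cokernel. As $\Omega^1_{\mathcal R_+}$ is free of rank one on the symbol $du$, it suffices to prove that every form $\omega=f\,du$ with $f=\sum_{i=0}^{\infty}f_iu^i\in\mathcal R_+$ (so $f_i\in\mathcal O[\tfrac1p]=K$ and $\liminf_{i\to\infty}\frac{v_p(f_i)}{i}\geq 0$) is exact.

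The plan is simply term-by-term integration. I would set $g=\sum_{j=1}^{\infty}\frac{f_{j-1}}{j}\,u^j$, so that formally $dg=\sum_{j=1}^{\infty}f_{j-1}u^{j-1}=f$, whence $dg=\omega$. The coefficients $g_j=f_{j-1}/j$ automatically lie in $K$ (here it is essential that we work over the characteristic-zero field $K$, so that division by $j$ is harmless), and the only real content of the claim is to verify the convergence condition $\liminf_{j\to\infty}\frac{v_p(g_j)}{j}\geq 0$ that places $g$ in $\mathcal R_+$.

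This is where the single substantive point of the argument lies. We have $v_p(g_j)=v_p(f_{j-1})-v_p(j)$, and hence $\frac{v_p(g_j)}{j}=\frac{v_p(f_{j-1})}{j}-\frac{v_p(j)}{j}$. Since $v_p(j)\leq \log_p j$ grows only logarithmically, the correction $\frac{v_p(j)}{j}$ tends to $0$, so it does not affect the $\liminf$; and $\frac{v_p(f_{j-1})}{j}=\frac{j-1}{j}\cdot\frac{v_p(f_{j-1})}{j-1}$ has $\liminf\geq 0$ because $f\in\mathcal R_+$ already satisfies $\liminf_i\frac{v_p(f_i)}{i}\geq 0$ and $\frac{j-1}{j}\to 1$. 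Combining these, $\liminf_{j\to\infty}\frac{v_p(g_j)}{j}\geq 0$, so $g\in\mathcal R_+$ and $\omega=dg$ is exact.

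I expect no serious obstacle: the entire content is the elementary estimate $v_p(j)/j\to 0$, which is precisely what makes formal antidifferentiation possible in $\mathcal R_+$ even though it fails in $k((t))$ itself. The one subtlety worth spelling out is the $\liminf$ inequality for a difference of sequences, which goes through cleanly because the subtracted sequence converges to $0$ (for every $\varepsilon>0$ one has $a_{j-1}\cdot\frac{j-1}{j}>-\varepsilon$ eventually whenever $a_{j-1}>-\varepsilon$). I would also remark that, because $\mathcal R_+$ involves only non-negative powers of $u$, there is no residue term $du/u$ to obstruct exactness, so unlike for $\mathcal E$ or $\mathcal R$ the de Rham cohomology vanishes outright.
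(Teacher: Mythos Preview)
Your argument is correct and is exactly the paper's approach: the paper's proof is the one-liner ``if $\sum_{i=0}^{\infty}x_i u^i\in\mathcal R_+$ then $\sum_{i=0}^{\infty}\frac{x_i}{i+1}u^{i+1}$ also lies in $\mathcal R_+$,'' and you have simply written out the verification that the antiderivative satisfies the growth condition, using $v_p(j)/j\to 0$. There is nothing to add.
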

\begin{proof} Simply note that if
$\sum_{i=0}^{\infty}x_i u^i\in\mathcal R_+$ then $\sum_{i=0} ^{\infty}\frac{x_i}{i+1}u^{i+1}$ also lies in $\mathcal R_+$.
\end{proof}
Now we can tie in the contents of this section with the formal logarithm construction of the previous section.
\begin{defn} Let $v\in k[[t]]^*$. Then $\textrm{dlog}^{\dagger}(v)\in H^1_{dR}(\mathcal E_+)$. By the above the image of this class under the natural map $H^1_{dR}(\mathcal E_+)\to H^1_{dR}(\mathcal R_+)$ is trivial, so there is a $w\in\mathcal R_+$ such that $dw=\textrm{dlog}^{\dagger}(v)$, unique up to adding an element of $\mathcal E_+$. It is reasonable to denote the class of this element in $\mathcal R_+/\mathcal E_+$ by $\textrm{log}^{\dagger}(v)$ in light of the above. The resulting map $\textrm{log}^{\dagger}:k[[t]]^*\to\mathcal R_+/\mathcal E_+$ is a homomorphism with kernel $k^*$. 
\end{defn}
\begin{rem} There is an obstruction to extend this construction to the whole $k((t))^*$, taking values in $\mathcal R/\mathcal E^{\dagger}$, namely the residue map. Indeed similarly to the construction in the proof of Theorem \ref{p-adic_log1}, there is a  residue map on $\Omega^1_{\mathcal E^{\dagger}}$ given by
$$\sum_j x_j u^j du\mapsto x_{-1},\quad\Omega^1_{\mathcal E^{\dagger}}\to K,$$
moreover we have a similar map for $\Omega^1_{\mathcal R}$, and these maps are compatible with the inclusions $\Omega^1_{\mathcal E^{\dagger}}\subset\Omega^1_{\mathcal E}$ and $\Omega^1_{\mathcal E^{\dagger}}\subset\Omega^1_{\mathcal R}$. Since there is no term of degree $-1$ in any exact form, we get well-defined homomorphisms res$:H^1_{dR}(\mathcal E^{\dagger})\to K$ and res$:H^1_{dR}(\mathcal R)\to K$. From Lemma \ref{residue} we get that the diagram commutes:
$$\xymatrix{ k((t))^* \ar^-{\textrm{\rm dlog}^{\dagger}}[r] \ar^{\deg}[d] & H^1_{dR}(\mathcal E^{\dagger})
\ar^{\textrm{\rm res}}[d]\ar[r] & H^1_{dR}(\mathcal R)
\ar^{\textrm{\rm res}}[dl] \\
\mathbb Z  \ar@{^{(}->}[r] & K .&}$$
On the other hand the map
$$\textrm{res}:H^1_{dR}(\mathcal R)\longrightarrow K$$
is an isomorphism by the lemma below, so $\textrm{\rm dlog}^{\dagger}(v)$ is integrable if and only if $v\in k[[t]]^*$.
\end{rem}
\begin{lemma}\label{solvability3} The map
$\textrm{\rm res}:H^1_{dR}(\mathcal R)\longrightarrow K$
is an isomorphism.
\end{lemma}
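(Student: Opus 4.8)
The plan is to verify that $\textrm{res}$ is well-defined (which has already been observed in the preceding remark), surjective, and injective, the last being the only substantive point.

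Surjectivity is immediate: for any $c\in K$ the Laurent polynomial $cu^{-1}$ lies in $\mathcal{R}$, since all but one of its coefficients vanish and so the two $\liminf$ conditions defining $\mathcal{R}$ hold trivially; and $\textrm{res}$ sends the class of $c\,\frac{du}{u}$ to $c$. For injectivity I would show that every $\omega=\sum_j x_j u^j\,du\in\Omega^1_{\mathcal{R}}$ with vanishing residue $x_{-1}=0$ is exact. The natural candidate antiderivative is obtained by integrating term by term, namely the bidirectional series
$$f=\sum_{j\neq-1}\frac{x_j}{j+1}u^{j+1},$$
which formally satisfies $df=\omega$; the only thing to check is that $f$ actually lies in $\mathcal{R}$. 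Writing $f=\sum_k y_k u^k$ with $y_k=x_{k-1}/k$ for $k\neq0$ and $y_0=0$, we have $v_p(y_k)=v_p(x_{k-1})-v_p(k)$. This is the analogue for $\mathcal{R}$ of the elementary computation behind Lemma \ref{solvability2} for $\mathcal{R}_+$, the difference being that we must now preserve the two $\liminf$ growth conditions defining $\mathcal{R}$ rather than mere membership in a ring of nonnegative powers.

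The heart of the matter, and the step I expect to require the most care, is the estimate that term-by-term integration does not destroy these growth conditions. The key observation is that dividing the $j$-th coefficient by $j+1$ perturbs its valuation only by $v_p(j+1)\leq\log_p|j+1|$, which is negligible compared to the linear rate appearing in the definition of $\mathcal{R}$; concretely $\tfrac{v_p(k)}{|k|}\to0$ as $|k|\to\infty$. Hence, reindexing via $j=k-1$ (so that the ratios $\tfrac{-(k-1)}{-k}$ and $\tfrac{k-1}{k}$ tend to $1$) and using that adding a term tending to $0$ leaves a $\liminf$ unchanged, one finds
$$\liminf_{k\to-\infty}\frac{v_p(y_k)}{-k}=\liminf_{j\to-\infty}\frac{v_p(x_j)}{-j}>0,\qquad \liminf_{k\to+\infty}\frac{v_p(y_k)}{k}=\liminf_{j\to+\infty}\frac{v_p(x_j)}{j}\geq0,$$
the final inequalities being exactly the conditions guaranteeing that the coefficient series of $\omega$ lies in $\mathcal{R}$. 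In particular the strict positivity at $-\infty$ is preserved, which is precisely what makes the argument go through. Thus $f\in\mathcal{R}$ and $[\omega]=0$, so the kernel of $\textrm{res}$ is trivial, completing the proof.
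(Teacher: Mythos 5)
Your proof is correct and takes essentially the same route as the paper: surjectivity via the form $c\,\frac{du}{u}$, and injectivity by term-by-term antidifferentiation of any form with vanishing residue. The paper's one-line argument leaves the preservation of the two $\liminf$ conditions implicit, whereas you verify it explicitly via the bound $v_p(j+1)\leq\log_p|j+1|$; this is precisely the detail the paper's proof relies on.
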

\begin{proof} The map is obviously surjective. In order to see injectivity, simply note that if $\sum_{i \in \mathbb N,i\neq-1}x_i u^i\in\mathcal R$ then $\sum_{i \in \mathbb N,i\neq-1}\frac{x_i}{i+1}u^{i+1}$ also lies in $\mathcal R$.
\end{proof}

\section{Iterated $p$-adic line integrals over Laurent series fields of characteristic $p$}

\begin{defn}\label{nabla1b} Let $R$ be one of the rings $\mathcal E_+,\mathcal E,\mathcal E^{\dagger},\mathcal R_+$ or $\mathcal R$. A $\nabla$-module over $R$ is a pair $(M,\nabla)$, where $M$ is a finite, free $R$-module, and $\nabla$ is a connection on $M$, i.e.~a $K$-linear map:
$$\nabla:M\to M\otimes_{R}\Omega^1_{R}$$
satisfying the Leibniz rule
$$\nabla(c\mathbf v)=c\nabla(\mathbf v)+\mathbf v\otimes dc\ \ 
(\forall c\in R,\mathbf v\in M).$$ 
The trivial $\nabla$-module over $R$ is just the pair $(R,d)$. A horizontal map from a $\nabla$-module $(M,\nabla)$ to another $\nabla$-module $(M',\nabla')$ is just a $R$-linear map
$f:M\to M'$ such that the following diagram is commutative:
$$\xymatrix{
M \ar^-{\nabla}[r] \ar^{f}[d] & M\otimes_{R}\Omega^1_{R} \ar^{f\otimes_{R}\textrm{\rm id}_{\Omega^1_{R}}}[d] \\
M'\ar^-{\nabla'}[r] & M'\otimes_{R}\Omega^1_{R}.}$$
As usual we will simply denote by $M$ the ordered pair $(M,\nabla)$ whenever this is convenient.
\end{defn}
\begin{defn} Now let $R\subset R'$ be two rings from the list above and let $(M,\nabla)$ be a $\nabla$-module over $R$. Let $\nabla'$ be the unique connection:
$$\nabla':M\otimes_RR'\longrightarrow
 (M\otimes_RR')\otimes_{R'}\Omega^1_{R'}\cong (M\otimes_R\Omega^1_{R})\otimes_{R'}R'$$
such that
$$\nabla'(m\otimes_Rs)=\nabla m\otimes_Rs+m\otimes_Rds,\ \ 
(\forall m\in M,\forall s\in R).$$ 
Then the couple $(M\otimes_RR',\nabla')$ is a $\nabla$-module over $R'$ which we will denote by $M\otimes_RR'$ for simplicity and will call the pull-back of $M$ onto $R'$. Moreover for every homomorphism $h:M\rightarrow M'$ of $\nabla$-modules over $R$ the $R'$-linear extension $h\otimes_R\textrm{id}_{R'}:M\otimes_RR'
\rightarrow M'\otimes_RR'$ is a morphism of $\nabla$-modules over $R'$. These objects form a $K$-linear Tannakian category, with respect to horizontal maps as morphisms, and with the obvious notion of directs sums, tensor products, quotients and duals. Note that we may define similar notions for the integral rings $\Gamma_+,\Gamma^{\dagger}$ and $\Gamma$ by substituting $K$-linearity with $\mathcal O$-linearity.
\end{defn}
\begin{defn} A horizontal section of a $\nabla$-module
$(M,\nabla)$ over $R$ is an $s\in M$ such that $\nabla(s)=0$. We denote the set of the latter by $M^{\nabla}$. Note that for every $s\in M^{\nabla}$ there is a unique morphism from the trivial $\nabla$-module to $(M,\nabla)$ such that the image of $1$ is $s$. Of course a $\nabla$-module over $R$ is {\it trivial} if it is isomorphic to the $n$-fold direct sum of the trivial $\nabla$-module for some $n$ (over $R$).
\end{defn}
Note that any reasonable version of Lemma \ref{solvability} is false; in fact there is a $\nabla$-module over $\mathcal E_+$ whose pull-back to $\mathcal R$ is not trivial. (In fact the basic counterexample is very simple; it corresponds to the 
differential equation $y'=y$. For a further explanation see Example 0.4.1 of \cite{Ke} on page 7.) However the analogue of the framed version (Lemma \ref{filtered_iso}) is true, at least over
$\mathcal R_+$. We are going to formulate this claim next. 
\begin{notn} Let $\underline r=(r_1,r_2,\ldots,r_n)$ be a vector consisting of positive integers, and set $r=r_1+r_2+\cdots+r_n$, as in Definition \ref{framed1}. Let $M$ be a $\nabla$-module over $R$ equipped with a filtration:
$$0=M_0\subset M_1\subset\cdots\subset M_n=M$$
by sub $\nabla$-modules such that the rank of $M_i$ over $R$ is $r_1+\cdots+r_i$. Set $r=r_1+r_2+\cdots+r_n$, and equip the trivial $\nabla$-module $T=R^{\oplus r}$ with the filtration:
$$0=T_0\subset T_1\subset\cdots\subset T_n=T,$$
where 
$$T_i=\underbrace{R\oplus R\oplus\cdots\oplus R}
_{r_1+\cdots+r_i}\oplus\underbrace{0\oplus\cdots\oplus0}
_{r_{i+1}+\cdots+r_n}.$$
Also assume that for every index $i=1,2,\ldots,n$ an isomorphism of $\nabla$-modules:
$$\phi_i:M_i/M_{i-1}\longrightarrow R^{\oplus r_i}$$
is given where $R$ is equipped with the trivial connection. We will call such objects (consisting of $(M,\nabla)$, the filtration
$M_0\subset M_1\subset\cdots\subset M_n$, and the isomorphisms $\phi_i$) {\it filtered $\nabla$-modules of signature $\underline r$}. There is a natural notion of isomorphism of filtered $\nabla$-modules of signature $\underline r$, namely, it is an isomorphism of the underlying $\nabla$-modules which maps the filtrations to each other, and identifies the isomorphisms $\phi_i$. 
\end{notn}
Now let $(M,\nabla,M_i,\phi_i)$ be a filtered $\nabla$-module of signature $\underline r$ and let $(T,T_i)$ be as above.
\begin{lemma}\label{filtered_iso2} Assume that $R=\mathcal R_+$. Then there is an isomorphism $\phi:M\to T$ of $\nabla$-modules such that $\phi(M_i)=T_i$ and the induced isomorphism
$$\phi^i:M_i/M_{i-1}\longrightarrow T_i/T_{i-1}\cong(\mathcal R_+)^{\oplus r_i}$$
is $\phi_i$ for every index $i=1,2,\ldots,n$.
\end{lemma}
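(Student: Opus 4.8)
The plan is to reduce the filtered statement to an inductive argument on the length $n$ of the filtration, where at each stage the essential analytic input is the triviality of $H^1_{dR}(\mathcal R_+)$ proved in Lemma \ref{solvability2}. The base case $n=1$ is immediate, since then $M=M_1$ and $\phi_1$ itself is the desired isomorphism onto $R^{\oplus r_1}=T$. For the inductive step, I would assume the result for filtrations of length $n-1$ and apply it to the sub $\nabla$-module $M_{n-1}$ with its induced filtration $0=M_0\subset\cdots\subset M_{n-1}$ and the isomorphisms $\phi_1,\ldots,\phi_{n-1}$; this produces an isomorphism $\psi:M_{n-1}\to T_{n-1}$ compatible with everything below level $n-1$. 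The task then becomes to extend $\psi$ across the top extension $0\to M_{n-1}\to M\to M_n/M_{n-1}\to0$.

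The core is therefore the case of a two-step extension $0\to N\to M\to Q\to0$ of $\nabla$-modules over $\mathcal R_+$, where $N$ and $Q$ are already trivialised (via $\psi$ and $\phi_n$ respectively). First I would choose any $\mathcal R_+$-linear splitting of the underlying module sequence, giving a lift of the basis of $Q\cong(\mathcal R_+)^{\oplus r_n}$ to elements of $M$; in this basis together with the basis of $N$, the connection $\nabla$ is represented by a block upper-triangular matrix whose diagonal blocks are the (trivial) connections on $N$ and $Q$ and whose off-diagonal block is a matrix $A$ of one-forms in $\Omega^1_{\mathcal R_+}$. To trivialise $M$ compatibly with the filtration, I need a unipotent change of frame, i.e.\ a matrix $W$ over $\mathcal R_+$ of the shape appearing in Example \ref{nabla-log}, such that conjugating $\nabla$ by $W$ kills the off-diagonal block. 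Writing this out, the requirement is exactly that $dW$ equals (up to sign) the off-diagonal one-form block $A$ entrywise, which is solvable precisely because every entry of $A$ is an exact form in $\Omega^1_{\mathcal R_+}$: this is where Lemma \ref{solvability2} enters, since $H^1_{dR}(\mathcal R_+)=0$ guarantees each $A_{ij}\in d\mathcal R_+$.

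The main obstacle, and the point requiring the most care, is bookkeeping to ensure that the primitive $W$ can be chosen so that the resulting isomorphism $\phi:M\to T$ not only respects the filtration and matches $\phi_n$ on the top quotient, but also restricts to $\psi$ on $M_{n-1}$ rather than to some other trivialisation differing by an automorphism of $T_{n-1}$. I expect to handle this by first fixing $\psi$ on $M_{n-1}$, then choosing the lift of the $Q$-basis and solving $dW=A$ only in the off-diagonal block that couples level $n$ to the lower levels, leaving the already-fixed lower block untouched; since the primitive of an exact one-form in $\mathcal R_+$ is unique up to an additive constant in $K$, the only indeterminacy is a constant unipotent matrix, which is harmless because $T_i$ and the induced quotient isomorphisms $\phi^i$ are insensitive to right multiplication by $U_{\underline r}(K)$. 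Assembling the inductive step then yields the claimed $\phi$, completing the argument. One should note that unlike the characteristic-zero case no full equivalence of categories is available here (the remark before the statement stresses that Lemma \ref{solvability} fails), so the argument genuinely uses only the vanishing of $H^1_{dR}(\mathcal R_+)$ together with the filtered, framed hypotheses, and not any triviality of arbitrary $\nabla$-modules.
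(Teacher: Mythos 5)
Your proposal is correct and follows essentially the same route as the paper: induction on the filtration length, trivialising $M_{n-1}$ via the inductive hypothesis, lifting a horizontal frame of the top quotient so that the connection matrix is concentrated in the off-diagonal block, and then integrating that block entrywise using Lemma \ref{solvability2} ($H^1_{dR}(\mathcal R_+)=0$). The bookkeeping issue you flag (the primitive being unique only up to a constant in $K$, which does not disturb the filtration or the induced maps on the graded pieces) is handled identically in the paper.
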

It will be simpler to introduce some additional definitions before we give the proof of the lemma above.
\begin{defn}\label{framed1b} Let $\underline r=(r_1,r_2,\ldots,r_n)$ be a vector consisting of positive integers, and set $r=r_1+r_2+\cdots+r_n$. A framed $\nabla$-module of signature $\underline r$ (over $R$) is a $\nabla$-module $(M,\nabla)$ over $R$ equipped with an $R$-basis $e_1,e_2,\ldots,e_r$ of $M$ such that
$$M_i=\textrm{ the $R$-span of
$e_1,e_2,\ldots,e_{r_1+\cdots+r_i}$}$$
is a sub $\nabla$-module, and the image of $e_{r_1+\cdots+r_{i-1}+1},\ldots,e_{r_1+\cdots+r_i}$ in the quotient $M_i/M_{i-1}$ is a $k$-basis of $(M_i/M_{i-1})^{\nabla}$. There is a natural notion of isomorphism of framed $\nabla$-modules of signature
$\underline r$ in this setting, too. 
\end{defn}
\begin{proof}[Proof of Lemma \ref{filtered_iso2}] We are going to prove the claim by induction on $n$. The case $n=1$ is obvious. Assume now that the claim holds for $n-1$. Note that $(M_i/M_{i-1})^{\nabla}$ spans $M_i/M_{i-1}$ as an $\mathcal R_+$-module, since the latter is a trivial $\nabla$-module. Also note that $M$ is a free $\mathcal R_+$-module. Therefore we may choose a $\mathcal R_+$-basis $e_1,e_2,\ldots,e_r$ of $M$ such that $M_i$ is the $\mathcal R_+$-span of $e_1,\ldots,e_{r_1+\cdots+r_i}$, and $(M,\nabla)$ equipped with this basis is a framed $\nabla$-module of signature $\underline r$. By the induction hypothesis we may assume that $e_1,\ldots,e_{r_1+\cdots+r_{n-1}}$ are horizontal. Let
$\mathbf e_1,\mathbf e_2,\ldots,\mathbf e_r$ is the 1st, 2nd, etc.~basis vector of $T$. We may also assume without loss of generality that $\phi_i$ maps the image of $e_{r_1+\cdots+r_{i-1}+1},\ldots,e_{r_1+\cdots+r_i}$ under the quotient map to  the image of $\mathbf e_{r_1+\cdots+r_{i-1}+1},\ldots,\mathbf e_{r_1+\cdots+r_i}$ under the quotient map for every $i=1,\ldots,n$.

Let $C$ be the matrix of the connection $\nabla$ in the $\mathcal R_+$-basis $e_1,\ldots,e_r$, that is, for every $s_1,s_2,\ldots,s_r\in\mathcal R_+$ we have:
$$\nabla(s_1e_1+\cdots+s_re_r)=
e_1\otimes ds_1+\cdots+e_r\otimes ds_1+
(s_1e_1,\cdots,s_re_r)\cdot C,$$
where the $\cdot$ in the last term denotes the row-column multiplication with respect to the tensor product. Then $C$ is an $r\times r$ matrix with coefficients in $\Omega^1_{\mathcal R_+}$ composed of blocks $C_{ij}$ such that for every pair $(i,j)$ of indices $C_{ij}$ is an $r_i\times r_j$ matrix with coefficients in $\Omega^1_{\mathcal R_+}$, and $C_{ij}$ is the zero matrix unless $i=1$ and $j=n$.

By Lemma \ref{solvability2} there is a matrix $U$ of rank
$\underline r$ with coefficients in $\mathcal R_+$ such that
$dU=C$ and $U_{ij}$ is the zero matrix unless $i=1$ and $j=n$. Consider $\mathcal R_+$-linear map $\phi:M\to T$ given by:
$$\phi(\lambda_1e_1+\cdots+\lambda_re_r)=
(\lambda_1\mathbf e_1,\cdots,\lambda_r\mathbf e_r)\cdot(I+U)$$
for every $\lambda_1,\ldots,\lambda_r\in\mathcal R_+$, where $I$ is the $r\times r$ identity matrix and $\cdot$ denotes the row-column multiplication here. It is the isomorphism of $\nabla$-modules we are looking for.
\end{proof}
\begin{defn}\label{invariant} Now let $(M,\nabla,M_1,\ldots,M_r,\phi_1,\ldots,\phi_r)$ be a filtered $\nabla$-module of signature $\underline r$ over $\mathcal E_+$. We may choose an $\mathcal E_+$-basis $e_1,e_2,\ldots,e_r$ of $M$ such that $M_i$ is the $\mathcal E_+$-span of $e_1,\ldots,e_{r_1+\cdots+r_i}$, and $(M,\nabla)$ equipped with this basis is a framed $\nabla$-module of signature $\underline r$. By Lemma \ref{filtered_iso2} above there is an isomorphism $\phi:M
\otimes_{\mathcal E_+}\mathcal R_+\to T$ of $\nabla$-modules over $\mathcal R_+$ such that $\phi(M_i\otimes_{\mathcal E_+}\mathcal R_+)=T_i$ and the induced isomorphism
$$\phi^i:M_i\otimes_{\mathcal E_+}\mathcal R_+/
M_{i-1}\otimes_{\mathcal E_+}\mathcal R_+\cong
(M_i/M_{i-1})\otimes_{\mathcal E_+}\mathcal R_+
\longrightarrow T_i/T_{i-1}\cong\mathcal R_+^{\oplus r_i}$$
is $\phi_i\otimes_{\mathcal E_+}\!\!\textrm{id}_{\mathcal R_+}$ for every index $i=1,2,\ldots,n$. The matrix of $\phi$ in the basis $e_1\otimes_{\mathcal E_+}1,e_2\otimes_{\mathcal E_+}1,
\ldots,e_r\otimes_{\mathcal E_+}1$ is an element of
$U_{\underline r}(\mathcal R_+)$, unique up to multiplication on the right by a matrix in $U_{\underline r}(K)$, corresponding to an automorphism of the $\nabla$-module $T$ respecting its filtration and the horizontal bases on the Jordan--H\"older components, and up to multiplication on the left by a matrix in $U_{\underline r}(\mathcal E_+)$, corresponding to a change of the basis $e_1,\ldots,e_r$. We get a well-defined map from the isomorphism classes of framed  $\nabla$-modules of signature $\underline r$ over $\mathcal E_+$ into the set
$U_{\underline r}(\mathcal E_+)\backslash
U_{\underline r}(\mathcal R_+)/U_{\underline r}(K)$ of double cosets. 
\end{defn}
\begin{defn} Write $\mathcal O_n=\mathcal O/(p^{n+1})$. For a topologically finitely generated $\Gamma_+$-algebra $A$, with reductions $A_n=A/(p^{n+1})$, we let
$$\Omega^1_{A/\mathcal O}\stackrel{\textrm{def}}{=}
\varprojlim_{n\to\infty}\Omega^1_{A_n/\mathcal O_n}$$
be the module of $p$-adically continuous differentials. The limit of the differentials of $A_n$ over $\mathcal O_n$ furnishes a $p$-adically continuous differential $d:A\to\Omega^1_{A/\mathcal O}$. When $A=\Gamma_+=\mathcal O[[u]]$ then 
$\Omega^1_{\mathcal O[[u]]/\mathcal O}$ is the free $\mathcal O[[u]]$-module of rank one generated by the symbol $du$.
Let $X$ be a formally smooth $u$-adic formal scheme of finite type over Spf$(\Gamma_+)$. Then we may define the $p$-adically continuous K\"ahler differentials $\Omega^1_{X/\mathcal O}$ by patching, and it is a finite, locally free formal $\mathcal O_X$-module, equipped with a differential $d:\mathcal O_X\to\Omega^1_{X/\mathcal O}$.
\end{defn}
\begin{defn} Let $X$ be as above. A $\nabla$-module over $X$ is a pair $(M,\nabla)$, where $M$ is a finite, locally free formal
$\mathcal O_X$-module, and $\nabla$ is a connection on $M$, i.e.~an $\mathcal O$-linear map of sheaves:
$$\nabla:M\to M\otimes_{\mathcal O_X}
\Omega^1_{X/\mathcal O}$$
satisfying the Leibniz rule
$$\nabla(c\mathbf v)=c\nabla(\mathbf v)+\mathbf v\otimes dc$$
for every open $U\subset X$ and $c\in\Gamma(U,\mathcal O_X),\mathbf v\in\Gamma(U,M)$. 
\end{defn}
\begin{defn} The trivial $\nabla$-module over $X$ is just $\mathcal O_X$ equipped with the differential $d:\mathcal O_X\to\Omega^1_{X/\mathcal O}\cong\mathcal O_X\otimes_{\mathcal O_X}\Omega^1_{X/\mathcal O}$. Moreover horizontal maps of $\nabla$-modules over $X$ is defined the same way as above. We get a $K$-linear category with the usual notion of direct sums, duals and tensor products. Again we will denote by $M$ the ordered pair $(M,\nabla)$ whenever this is convenient. Finally let $M^{\nabla}$ denote the sheaf of horizontal sections of $M$:
$$\Gamma(U,M^{\nabla})\stackrel{\textrm{def}}{=}
\{s\in\Gamma(U,M)|\nabla(s)=0\}.$$
Note that $M$ is a trivial $\nabla$-module of rank $n$, that is, isomorphic to the $n$-fold direct sum of $(\mathcal O_X,d)$, if and only if $M^{\nabla}$ is the constant sheaf in rank $n$ free $\mathcal O$-modules. It is possible to define the notion of filtered and framed $\nabla$-modules in this more general context, too. We will leave the details to the reader.
\end{defn}
\begin{defn} The notion of $\nabla$-modules and framed $\nabla$-modules are natural in $X$. Let $f:X\to Y$ be a morphism of formally smooth formal schemes of finite type over Spf$(\Gamma_+)$. The morphism $f$ induces an $\mathcal O_X$-linear map $df:f^*(\Omega^1_{Y/\mathcal O})\to\Omega^1_{X/\mathcal O}$. The pull-back $f^*(M,\nabla)$ of a $\nabla$-module $(M,\nabla)$ with respect to $f$ is $f^*(M)$ equipped with the composition:
$$\xymatrix{
f^*(\nabla):f^*(M)\ar[r] & f^*(M\otimes_{\mathcal O_Y}\Omega^1_{Y/\mathcal O})\cong f^*(M)\otimes_{\mathcal O_X}f^*(\Omega^1_{Y/\mathcal O})
\ar[r]
& \Omega^1_{X/\mathcal O},}$$
where the first arrow is the pull-back of $\nabla$ with respect to $f$, and the second is $\textrm{id}_{f^*(M)}\otimes_{\mathcal O_X}df$. The pull-back of a filtered $\nabla$-module $(M,\nabla,M_1,\ldots,M_r,\phi_1,\ldots,\phi_r)$ of signature $\underline r$ on $Y$ with respect to $f$ is the pull-back $f^*(M,\nabla)$ equipped with the filtration $f^*(M_1),\ldots,f^*(M_r),f^*(\phi_1),\ldots,f^*(\phi_r)$. Since pull-back commutes with quotients and the pull-back of horizontal sections are horizontal, this construction is a filtered $\nabla$-module of signature $\underline r$ on $X$.
\end{defn}
\begin{defn} For every $X$ as above let $X(\Gamma_+)$ denote the set of sections $f:\textrm{Spf}(\Gamma_+)\to X$. Let $\mathbf M=(M,\nabla,M_1,\ldots,M_r,\phi_1,\cdots,\phi_r)$ be a filtered $\nabla$-module of signature $\underline r$ on $X$. Then for every
$f\in X(\Gamma_+)$ the pull-back of $\mathbf M$ with respect to $f$ is a filtered $\nabla$-module of signature $\underline r$ over
$\Gamma_+$. By applying the functor $\cdot\otimes_{\Gamma_+}
\mathcal E_+$ we get a filtered $\nabla$-module of signature $\underline r$ over $\mathcal E_+$. By taking isomorphism classes and using the construction in Definition \ref{invariant} we get a function
$$\int_{\mathbf M}:X(\Gamma_+)\longrightarrow
U_{\underline r}(\mathcal E_+)\backslash U_{\underline r}(\mathcal R_+)/U_{\underline r}(K)$$
which we will call the line integral of $\mathbf M$. 
\end{defn}
\begin{example}\label{nabla_global2} Let $X$ be Spf$(\mathcal O[[u,x]])$. In order to give a $\nabla$-module on $X$, it is sufficient to give a $\mathcal O$-linear map:
$$\nabla:\mathcal O[[u,x]]^{\oplus2}\longrightarrow 
\mathcal O[[u,x]]^{\oplus2}\otimes_{\mathcal O[[u,x]]}\Omega^1_{\mathcal O[[u,x]]/\mathcal O}$$
satisfying the Leibniz rule, where
$$\Omega^1_{\mathcal O[[u,x]]/\mathcal O}=
\mathcal O[[u,x]]\cdot du\oplus \mathcal O[[u,x]]\cdot dx,$$
with differential $d:\mathcal O[[u,x]]\to\Omega^1_{\mathcal O[[u,x]]/\mathcal O}$ given by:
$$d\big(\sum_{ij}a_{ij}u^ix^j\big)=\sum_{ij}
(ia_{ij}u^{i-1}x^jdu+ja_{ij}u^ix^{j-1}dx).$$
Let $e_1,e_2$ be the 1st, respectively 2nd basis vector of $\mathcal O[[u,x]]^{\oplus2}$, and let $\nabla$ be the unique connection of $\mathcal O[[u,x]]^{\oplus2}$ such that
$$\nabla(e_1)=0,\quad\nabla(e_2)=e_1\otimes\frac{dx}{1+x},$$
where $(1+x)^{-1}=\sum_{i=0}^{\infty}(-1)^ix^i$. Equipped with the frame $e_1,e_2$ this $\nabla$-module is framed of signature $(1,1)$. Let $\mathbf M$ denote this object. Note that sections of $X\to\textrm{Spf}(\mathcal O[[u]])$ are exactly continuous $\mathcal O[[u]]$-algebra homomorphisms $\psi:\mathcal O[[u,x]]\to\mathcal O[[u]]$. Every such $\psi$ is determined by $\psi(1+x)$ which must be an invertible element of $\mathcal O[[u]]$. Conversely for every $v\in\mathcal O[[u]]^*$ there is a unique such $\psi_v:\mathcal O[[u,x]]\to \mathcal O[[u]]$ with the property $\psi_v(1+x)=v$. The  pull-back of $\mathbf M$ with respect to $\psi_v$ is the framed $\nabla$-module, where $M=\mathcal O[[u]]^{\oplus 2}$, the frame $e_1,e_2$ is the 1st, respectively 2nd basis vector of $M$, and $\nabla$ is the unique connection of $M$ such that
$$\nabla(e_1)=0,\quad\nabla(e_2)=e_1\otimes\frac{dv}{v}.$$
Let $\phi:M\otimes_{\Gamma_+}\mathcal R_+\cong
\mathcal R_+^{\oplus 2}\to T\cong\mathcal R_+^{\oplus 2}$ be an isomorphism of the type considered in Definition \ref{invariant} above. Then the matrix $V$ of $\phi$ in the basis $e_1\otimes_{\Gamma_+}\mathcal R_+,e_2\otimes_{\Gamma_+}\mathcal R_+$ is
$$V=\begin{pmatrix} 1& w \\ 0 & 1\end{pmatrix}\in U_{(1,1)}(\mathcal R_+)
\textrm{ such that}$$
$$d\circ V=\begin{pmatrix} 0& dw \\ 0 & 0\end{pmatrix}=
\begin{pmatrix} 0& \frac{dv}{v} \\ 0 & 0\end{pmatrix},$$
and hence
$$dw=\frac{dv}{v}.$$
So the invariant of the framed $\nabla$-module $(M,\nabla,e_1,e_2)$ is $\log^{\dagger}(v)$, i.e.~we get that the $p$-adic line integral:
$$\int_{\mathbf M}:X(\mathcal O[[u]])\cong
\mathcal O[[u]]^*\longrightarrow 
U_{(1,1)}(\mathcal E_+)\backslash
U_{(1,1)}(\mathcal R_+)/U_{(1,1)}(K)\cong
\mathcal E_+\backslash\mathcal R_+$$
is just the $p$-adic logarithm.
\end{example}
\begin{conclrem} What we have described is just the beginning of a theory, barely setting up the formalism to state less trivial results. However the simple, but key idea is already present: we should think of line integrals as fibre functors (or isomorphisms between them), but the functor should take values in a non-trivial Tannakian category, such as $\nabla$-modules over $\mathcal E_+$. One of the main reasons to carry this theory further is to study rational points on varieties over $k((t))$ which can be seen as follows.

Let $\overline X$ denote the special fibre of $X$, that is, its base change to Spec$(k[[t]])$. It is a smooth scheme of finite type over Spec$(k[[t]])$. We have a reduction map $r:X(\Gamma_+)\to
\overline X(k[[t]])$. Assume that $(M,\nabla)$ is {\it integrable}, i.e.~the curvature of $\nabla$, defined completely analogously to the classical construction is trivial. Then the map $\int_{\mathbf M}$ factors through $r:X(\Gamma_+)\to\overline X(k[[t]])$, that is, there is a map
$$\overline X(k[[t]])\longrightarrow
U_{\underline r}(\mathcal E_+)\backslash U_{\underline r}(\mathcal R_+)/U_{\underline r}(K),$$
necessarily unique, whose composition with the reduction map $r$ is the line integral of $\mathbf M$. Clearly we need to show the following: let $s_1,s_2\in X(\Gamma_+)$ be two sections such that $r(s_1)=r(s_2)$. Then the base changes of the filtered
$\nabla$-modules $s_1^*(\mathbf M)$ and $s_2^*(\mathbf M)$ to $\mathcal E_+$ are isomorphic. The latter can be proved in the usual way, using Grothendieck's equivalence between integrable $\nabla$-modules and crystals.

The natural next step is to study $k[[t]]$-valued points of smooth projective curves over Spec$(k[[t]])$ via these line integrals. These have smooth, proper formal lifts to $\Gamma_+$, and we may look at the universal $n$-unipotent (and integrable) $\nabla$-modules on these lifts, similarly to Besser's work (see \cite{Be}). The natural expectation is that the map which we get this way is independent of the formal lift to $\Gamma_+$, it is injective on residue disks, and it is possible to prove a suitable analogue of the main result of Kim's article \cite{Kim} (Theorem 1 on page 93). Combined with the global methods of the paper \cite{La}, we are set to give a new proof of the Mordell conjecture over global function fields along the lines of Kim's method. We plan to carry out this program in a forthcoming publication. Finally, let me also add that such a theory should exists also for analytic varieties, in the sense of Huber, over the adic spectrum of $(\mathcal E_+,\Gamma_+)$, and it is perhaps the natural setting, too.
\end{conclrem}
\begin{ackn} I wish to thank Amnon Besser and Chris Lazda for some useful discussions related to the contents of this article, and the referee for his comments. The author was partially supported by the EPSRC grant P36794.
\end{ackn}

\end{document}